\newtheorem{Theorem}{Theorem}[section]
\newtheorem{Corollary}{Corollary}[section]
\newtheorem{Proposition}{Proposition}[section]
\newtheorem{Lemma}{Lemma}[section]
\theoremstyle{definition}
\newtheorem{Remark}{Remark}[section]
\newtheorem{Definition}{Definition}[section]
\newtheorem{Problem}{Problem}
\numberwithin{equation}{section}
\newcommand{\N}{\mathbb{N}}
\newcommand{\R}{\mathbb{R}}
\newcommand{\pd}{\partial}
\newcommand{\lap}{\Delta}
\newcommand{\va}{\alpha}
\newcommand{\ve}{\varepsilon}
\begin{document}

%\title{Positivity of solutions to the biharmonic parabolic equation \\ and its %application}
\title{Positivity of solutions to the Cauchy problem for linear and semilinear
biharmonic heat equations}
\author{
\qquad\\
Hans-Christoph Grunau, Nobuhito Miyake and Shinya Okabe 
}
\date{}
\maketitle
\begin{abstract}
This paper is concerned with the positivity of solutions to the Cauchy problem for 
linear and nonlinear parabolic equations with the biharmonic operator as 
fourth order elliptic principal part. 
Generally, Cauchy problems for  parabolic equations of fourth order have no positivity preserving property due to the change of sign of the fundamental solution. 
One has \emph{eventual local} positivity for positive initial data, but on short time scales, one will in general have also regions of negativity.

The first goal of this paper is to find sufficient conditions on initial data which ensure the existence of solutions to the Cauchy problem for the linear biharmonic heat equation which are  \emph{positive for all times and in the whole space}. 

The second goal is to apply these results to show existence of \emph{globally} positive solutions to 
the Cauchy problem for a semilinear biharmonic parabolic equation. 
\end{abstract}
\vspace{25pt}
\noindent Addresses:

\smallskip
\noindent H.-Ch. G.: Fakult\"{a}t f\"{u}r Mathematik, Otto-von-Guericke-Universit\"{a}t, Postfach 4120, \\39016 Magdeburg, Germany.\\
\noindent 
E-mail: {\tt hans-christoph.grunau@ovgu.de}\\

\smallskip
\noindent N. M.:  Mathematical Institute, Tohoku University,
Aoba, Sendai 980-8578, Japan.\\
\noindent 
E-mail: {\tt nobuhito.miyake.t2@dc.tohoku.ac.jp}\\

\smallskip
\noindent 
\noindent S. O.:  Mathematical Institute, Tohoku University,
Aoba, Sendai 980-8578, Japan.\\
\noindent 
E-mail: {\tt shinya.okabe@tohoku.ac.jp}\\
\vspace{20pt}
\newline
\noindent
{\it 2010 AMS Subject Classifications}: Primary 35K25; Secondary 35B09, 35K58
\vspace{3pt}
\newline
Keywords: biharmonic heat equations, global positivity
\newpage

%%%%%%%%%%%%%%%%%%%%%%%%%%%%%%%%%%%%%%%%%%%%%%%%%%%%%
%%%%%%%%%%%%%%%%%%%%%%%%%%%%%%%%%%%%%%%%%%%%%%%%%%%%%
\section{Introduction} \label{Section:1}
%%%%%%%%%%%%%%%%%%%%%%%%%%%%%%%%%%%%%%%%%%%%%%%%%%%%%
%%%%%%%%%%%%%%%%%%%%%%%%%%%%%%%%%%%%%%%%%%%%%%%%%%%%%
This paper is concerned with the positivity of solutions to Cauchy problems for fourth order parabolic equations.

We say that a parabolic Cauchy problem has a \emph{positivity preserving property}
if non-negative and non-trivial initial data always yield  solutions 
which are  positive in the whole space and for any positive time.
It is well known that second order parabolic Cauchy problems enjoy a 
positivity preserving property.

On the other hand, it follows from \cite[Theorems 7.1 and 9.2]{CG} 
that the elliptic operator being of second order is not only sufficient 
but also \emph{necessary} for the corresponding Cauchy problem to 
enjoy a positivity preserving property. 
This means that this  property does not hold 
for the Cauchy problem for the biharmonic heat equation (see also \cite{Be, GG, FGG}):
\begin{alignat}{2}
\label{eq:1.1}
 & \pd_t u+(-\lap)^2 u =0 & &\quad \text{in} \quad \R^N\times(0, \infty),\\
\label{eq:1.2}
 & u(\cdot, 0) =\varphi(\cdot) & &\quad \text{in} \quad \R^N,
\end{alignat}
where $\varphi$ is a suitable measurable function and $N\ge1$.
``Suitable'' means locally integrable and less than exponential growth at infinity.
One should keep in mind that \emph{small times} are particularly sensitive for 
change of sign. For large times, at least in \emph{bounded domains}, the behaviour is more and 
more dominated by the elliptic principal part (and a strictly positive first eigenfunction would yield eventually positive solutions to the initial boundary value problem). 

The loss of the positivity preserving property for \eqref{eq:1.1}-\eqref{eq:1.2} is reflected by the sign change of the fundamental solution $G(\, \cdot\, , t)$ of the operator $\pd_t+(-\lap)^2$ in $\R^N\times(0, \infty)$
for all $t>0$. See Section~\ref{Subsection:2.1} below.
Moreover, it was even shown in \cite[Theorem~1]{GG} that for {\it any} non-negative and non-trivial function $\varphi\in C^\infty_{\rm c}(\R^N)$ there exists $T>0$ satisfying the following:
\begin{equation}
\label{eq:1.3}
 \inf_{x\in\R^N}[S(t)\varphi](x)<0
\end{equation}
for all $t\ge T$, where
\begin{equation}\label{eq:1.4}
 [S(t)\varphi](x):=\int_{\R^N}G(x-y, t)\varphi(y)\,dy
\end{equation}
solves  \eqref{eq:1.1}-\eqref{eq:1.2} 
for $(x, t)\in\R^N\times(0, \infty)$.

On the other hand, thinking of the biharmonic heat equation as a kind of linearised 
surface diffusion equation one would expect solutions to \eqref{eq:1.1}-\eqref{eq:1.2} 
for positive initial data to be \emph{on the whole positive}. 
Indeed, in \cite[Theorem~1]{GG}, it was proved that solutions to problem~\eqref{eq:1.1}-\eqref{eq:1.2} with non-negative non-trivial initial data $\varphi\in C^\infty_{\rm c}(\R^N)$ are eventually locally positive, that is, for any compact set $V\subset \R^N$ there exists $T=T(V)>0$ such that
\begin{equation*}
[S(t)\varphi](x)>0\quad \text{for}\quad  (x, t)\in V\times[T, \infty).
\end{equation*}
The issue of \emph{eventual local positivity} was studied further  in \cite{FGG} for
initial data with specific polynomial decay at inifinty: For $\beta>0$, initial data
\begin{equation}
\label{eq:1.5}
 \varphi(x):=\dfrac{1}{|x|^\beta+g(x)}
\end{equation}
with 
\begin{equation*}
 g\in A_\beta:=\{h\in C(\R^N)\mid h(x)>0, \ h(x)=o(|x|^{\beta})\ \text{as} \ |x|\to\infty\}
\end{equation*}
were considered. It was proved in \cite[Theorem~1.1]{FGG} 
that eventual local positivity holds locally uniformly 
and at an explicit asymptotic decay rate. At the same time this eventual positivity 
cannot be expected to be global, see \cite[Theorem~1.2]{FGG}: 
For each $\beta\in (0, N)$ and $t>1$ there exists a radially symmetric function $g\in A_\beta$  such that \eqref{eq:1.3} holds for $\varphi$ as in \eqref{eq:1.5}.

In order to understand the underlying reason for this change of sign even for large times
and how initial data could look like to avoid this, a first step was made also in \cite{FGG}:
\begin{Proposition}[{\cite[Proposition~A.6]{FGG}}]
\label{Proposition:1.1}
 Let $N=1$ and $\varphi(x):=|x|^{-\beta}$.
 For $\beta>0$ small enough, it holds that
 \begin{equation*}
% \label{eq:1.5}
  [S(t)\varphi](x)>0 \quad \text{for all}\quad  (x, t)\in\R\times(0, \infty).
 \end{equation*}
\end{Proposition}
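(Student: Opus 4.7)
The plan is to pass to Fourier space, reducing positivity in $x$ to positivity of a cosine integral, and then exploit a Pólya-type representation of the integrand as a positive superposition of tent functions, whose cosine transforms are manifestly $\ge 0$.

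First I would compute the relevant transform. In one dimension for $0 < \beta < 1$ one has $\widehat{|y|^{-\beta}}(\xi) = 2\Gamma(1-\beta)\sin(\pi\beta/2)\,|\xi|^{\beta-1}$, and $\widehat{G(\cdot,t)}(\xi) = e^{-t\xi^4}$. Fourier inversion gives
\[
[S(t)\varphi](x) \;=\; \frac{2\Gamma(1-\beta)\sin(\pi\beta/2)}{\pi}\,I(x,t),\qquad I(x,t) := \int_0^\infty g_t(\xi)\cos(x\xi)\,d\xi,
\]
with $g_t(\xi) := \xi^{\beta-1} e^{-t\xi^4}$. The prefactor is strictly positive for $0 < \beta < 1$, so it suffices to prove $I(x,t) > 0$ for every $(x,t) \in \R\times (0,\infty)$.

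The heart of the matter is that $g_t$ is \emph{strictly convex} on $(0,\infty)$ for small enough $\beta$. A direct computation yields
\[
g_t''(\xi) \;=\; \frac{g_t(\xi)}{\xi^2}\Bigl[(1-\beta)(2-\beta) \;-\; 4t(1+2\beta)\,\xi^4 \;+\; 16\,t^2\,\xi^8\Bigr],
\]
and the bracket, viewed as a quadratic in $\xi^4$, has discriminant $16t^2(16\beta-7)$, which is negative for $\beta < 7/16$; hence $g_t'' > 0$ everywhere on $(0,\infty)$. Two boundary-vanishing integrations by parts (using $g_t, g_t'$ decay super-exponentially at infinity and $(a-\xi)g_t'(a) \to 0$ as $a\downarrow \xi$) then give the Pólya-type representation
\[
g_t(\xi) \;=\; \int_\xi^\infty (a-\xi)\,g_t''(a)\,da,\qquad \xi > 0,
\]
expressing $g_t$ as an integral superposition of one-sided tent profiles $\xi\mapsto (a-\xi)_+$ with strictly positive weight $g_t''(a)\,da$.

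Substituting this into $I(x,t)$ and exchanging the order of integration --- Fubini is legitimate because $a^2 g_t''(a) \sim a^{\beta-1}$ near $0$ (integrable for $\beta>0$) and decays super-exponentially at infinity --- produces for $x\neq 0$ the closed form
\[
I(x,t) \;=\; \int_0^\infty g_t''(a)\,\frac{1-\cos(ax)}{x^2}\,da.
\]
Both factors are nonnegative, $g_t''(a) > 0$ for every $a > 0$ when $\beta < 7/16$, and $1-\cos(ax)$ vanishes only on the discrete set $\{2\pi k/|x|\}$; hence the integrand is strictly positive a.e., giving $I(x,t) > 0$. For $x = 0$ one has directly $I(0,t) = \int_0^\infty g_t = \tfrac{1}{4}t^{-\beta/4}\Gamma(\beta/4) > 0$.

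The main obstacle is the convexity check, which pins down the admissible range of $\beta$: the sign of the discriminant $16t^2(16\beta-7)$ is precisely what makes the approach work for small $\beta$ and break down as $\beta$ approaches $1$, consistent with the non-global positivity phenomena in \cite{FGG}.
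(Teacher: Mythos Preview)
Your proof is correct and follows essentially the same route as the paper's treatment of the $N=1$ case in the proof of Theorem~\ref{Theorem:1.2}(i): both reduce positivity of $[S(t)\varphi](x)$ to the convexity of $g_t(\xi)=\xi^{\beta-1}e^{-t\xi^4}$ on $(0,\infty)$ and obtain the identical threshold $\beta<7/16$ from the same discriminant computation. The only cosmetic difference is that the paper performs one integration by parts and concludes from the positivity and monotonicity of $-g_t'$ paired with $\sin$, whereas you perform two and use the equivalent P\'olya tent-function representation against $(1-\cos(ax))/x^{2}$; these are two standard packagings of the same convexity criterion.
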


So, it is natural to ask the following general question like Barbatis and Gazzola in \cite[Problem 13]{BG}:
\begin{Problem}
\label{Problem:A}
 For $N\ge1$, can one find suitable classes of initial data $\varphi$ 
 such that the corresponding solutions \eqref{eq:1.4}
 to \eqref{eq:1.1}-\eqref{eq:1.2}
  are globally positive{\rm ?}
\end{Problem}

To the best of our knowledge, the existence of \emph{globally (in space)} positive solutions to \eqref{eq:1.1}-\eqref{eq:1.2} has received only little attention. Beside Proposition~\ref{Proposition:1.1}, we mention Berchio's paper  \cite{B}. In \cite[Theorem~11]{B}  she considered
the initial datum 
$\varphi(x):=|x|^{-\beta}$ for $\beta\in (0,N)$
and introduced a right hand side with a strictly positive
impact. (The reader should notice that the actual formulation of \cite[Theorem~11]{B}
is not correct. A vanishing right hand side e.g. is not admissible.) 
In this situation she 
obtained \emph{eventual global} positivity.

In Theorem~\ref{Theorem:1.2} below we shall prove that this positivity is even 
global in time (i.e. even for arbitrarily small $t>0$) and holds for the 
\emph{homogeneous} biharmonic heat equation, provided that 
$\beta >0$ is small enough.

This, however, will follow from our first result which gives an affirmative answer to Problem~\ref{Problem:A}. 

Let $\mathcal{S}$ be the Schwartz space and $\mathcal{S}'$ be the space of tempered distributions.
We define $[S(t)\varphi](x)$ for $\varphi\in \mathcal{S}'$ as
\begin{equation*}
 [S(t)\varphi](x):=\langle\varphi, G(x-\cdot, t)\rangle
\end{equation*}
for $(x, t)\in\R^N\times(0, \infty)$, where $\langle\cdot, \cdot\rangle$ is the    duality pairing between $\mathcal{S}'$ and $\mathcal{S}$.
For $\varphi\in \mathcal{S}'$ we denote by $\mathcal{F}[\varphi]$ the Fourier transform of $\varphi$. If $\varphi\in \mathcal{S}'$ is even smooth then this is given by:
 \begin{equation}\label{eq:1.6}
  \mathcal{F}[\varphi](\xi):=(2\pi)^{-N/2}\int_{\R^N}e^{-ix\cdot \xi}\varphi(x)\,dx\quad\text{for} \quad \xi\in\R^N.
 \end{equation}
\begin{Theorem}
\label{Theorem:1.1}
 Let $N\ge3$ and $\varphi\in \mathcal{S}'$.
 Assume that all of the following conditions hold$\colon$
 \begin{itemize}
  \item[{\rm (a)}]
   $e^{-t|\cdot|^4}\mathcal{F}[\varphi]\in L^1(\R^N)$ for $t\in(0, \infty)$.
  \item[{\rm (b)}]
   $\mathcal{F}[\varphi]$ is real valued, radially symmetric and positive.
  \item[{\rm (c)}]
   $\psi(s):=s^{\frac{N-1}{2}}\mathcal{F}[\varphi](s)$ belongs to $C^1(0, \infty)$ and $\psi'(s)\le0$ for $s\in(0, \infty)$.
 \end{itemize}
 Then $[S(t)\varphi](x)$ is positive for $(x, t)\in\R^N\times(0, \infty)$.
\end{Theorem}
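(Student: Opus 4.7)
My plan is to convert $[S(t)\varphi](x)$ into a one-dimensional Hankel integral, then exploit the monotonicity in (c) via integration by parts, reducing everything to the positivity of a partial Bessel integral.

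By (b), $\mathcal{F}[\varphi]$ is radial, say $\mathcal{F}[\varphi](\xi) = f(|\xi|)$. Standard inversion of the radial Fourier transform and (a) yield
\begin{equation*}
[S(t)\varphi](x) \;=\; r^{-\nu}\int_0^\infty e^{-ts^4}\,f(s)\,s^{\nu+1}\,J_\nu(rs)\,ds \;=\; r^{-\nu}\int_0^\infty h(s)\,s^{1/2}\,J_\nu(rs)\,ds,
\end{equation*}
where $r = |x|$, $\nu := (N-2)/2 \ge 1/2$, and $h(s) := e^{-ts^4}\psi(s)$; the second equality uses $s^{\nu+1}f(s) = s^{1/2}\psi(s)$ because $\nu + 1/2 = (N-1)/2$. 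Conditions (b), (c) make $h$ a $C^1$, non-negative, non-increasing function on $(0,\infty)$, and (a) ensures $h(s) \to 0$ fast enough at infinity. Writing $h(s) = \int_s^\infty(-h'(u))\,du$ with $-h'(u) \ge 0$ and exchanging the order of integration,
\begin{equation*}
[S(t)\varphi](x) \;=\; r^{-\nu - 3/2}\int_0^\infty (-h'(u))\,L(ru)\,du,\qquad L(z) := \int_0^z v^{1/2}\,J_\nu(v)\,dv.
\end{equation*}
Since $-h'(u) \ge 0$, the theorem reduces to showing that $L(z) \ge 0$ for every $z > 0$ and every $\nu = (N-2)/2 \ge 1/2$, i.e.\ for $N \ge 3$.

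This positivity of the partial Bessel integral $L(z)$ is the main obstacle. The case $\nu = 1/2$ ($N = 3$) is immediate: $v^{1/2}J_{1/2}(v) = \sqrt{2/\pi}\,\sin v$ gives $L(z) = \sqrt{2/\pi}(1 - \cos z) \ge 0$. For general $\nu > 1/2$, the plan is to invoke Poisson's representation
\begin{equation*}
J_\nu(v) = \frac{2(v/2)^\nu}{\sqrt\pi\,\Gamma(\nu + 1/2)}\int_0^1(1 - w^2)^{\nu - 1/2}\cos(vw)\,dw,
\end{equation*}
and use Fubini to rewrite $L(z)$ as a weighted average over $w \in (0, 1)$ (with positive weight $(1 - w^2)^{\nu - 1/2}$) of the cosine moments $\int_0^{zw}u^{\nu + 1/2}\cos u\,du$. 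These moments are individually sign-indefinite, but the weighted average should be shown to be non-negative via a careful IBP, or by an induction that reduces half-integer $\nu$ to the base case $\nu = 1/2$ through the Bessel recurrences $(v^{\pm\mu}J_\mu(v))' = \pm v^{\pm\mu}J_{\mu \mp 1}(v)$. This should follow the pattern of the classical positivity results for partial Bessel integrals due to Cooke, Makai and Steinig. Once $L(z) \ge 0$ is established, the strict positivity of $[S(t)\varphi](x)$ for $t > 0$ follows: the factor $e^{-tu^4}$ is strictly decreasing, so $-h'(u) > 0$ on every open subinterval of $\{u>0:\psi(u) > 0\}$, while $L(ru) > 0$ at least on an initial interval $(0,j_{\nu,1}/r)$; at $x = 0$ the integrand $e^{-t|\xi|^4}\mathcal{F}[\varphi](\xi)$ is already manifestly non-negative and non-trivial, so positivity there is automatic.
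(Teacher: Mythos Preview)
Your reduction is correct and gives a genuinely different (though closely related) route from the paper's. The paper, after arriving at the same one-dimensional Hankel integral
\[
[S(t)\varphi](x)=|x|^{-(N+1)/2}\int_0^\infty W(s)\,s^{1/2}J_{(N-2)/2}(s)\,ds,
\qquad W(s)=\psi(|x|^{-1}s)\exp\!\bigl[-t s^4/|x|^4\bigr],
\]
does \emph{not} integrate by parts. It invokes directly the monotonicity theorem of Lorch--Muldoon--Szeg\H{o} (stated as Proposition~2.1): for $\mu\ge 1/2$ and $W>0$ with $W'\le 0$ (strict if $\mu=1/2$), the ``half-period masses'' $M_{\mu,k}=\int_{j_{\mu,k}}^{j_{\mu,k+1}}W(s)s^{1/2}|J_\mu(s)|\,ds$ are strictly decreasing, so the alternating series $\sum(-1)^kM_{\mu,k}$ is positive. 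That single citation finishes the proof in one line.

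Your Abel/Fubini manoeuvre $h(s)=\int_s^\infty(-h'(u))\,du$ trades the general-weight problem for the constant-weight lemma $L(z)=\int_0^z v^{1/2}J_\nu(v)\,dv\ge0$. This is a legitimate and classical reduction (it is essentially Bonnet's second mean value theorem in integral form), and your handling of the $\nu=1/2$ base case and of strict positivity is fine. The weak point is your plan for $L(z)\ge0$ when $\nu>1/2$: the Poisson-representation route you sketch leads to the sign-indefinite cosine moments $\int_0^{zw}u^{\nu+1/2}\cos u\,du$, and you do not indicate how the $(1-w^2)^{\nu-1/2}w^{-\nu-3/2}$ average restores positivity; likewise a recurrence in $\nu$ would only step through half-integers, missing even $N$. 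The cleanest way to close this is exactly the paper's tool applied with $W\equiv1$: the LMS monotonicity then gives $M_{\nu,k}>M_{\nu,k+1}$, and a short case split over whether $z$ lies in a ``positive'' or ``negative'' half-period of $J_\nu$ yields $L(z)>0$ for all $z>0$. So your approach ultimately rests on the same Bessel monotonicity as the paper's, just factored through an intermediate lemma; what it buys is a clean conceptual separation (decreasing weight $\Rightarrow$ reduce to partial-integral positivity), at the cost of an extra Fubini justification and the need to supply or cite the $L(z)\ge0$ result.
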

Theorem~\ref{Theorem:1.1} gives a general sufficient condition for the existence of positive solutions to problem~\eqref{eq:1.1}-\eqref{eq:1.2} when $N\ge3$. 
We remark that for sufficiently small $\beta>0$ the function $\varphi(x):=|x|^{-\beta}$ satisfies the assumptions on Theorem~\ref{Theorem:1.1} (for details, see Section~\ref{Subsection:3.2} and in particular \eqref{eq:3.15}).
Taking advantage of recurrence relations we can prove for this initial datum 
even in any dimension $N\ge 1$: 
\begin{Theorem}
\label{Theorem:1.2}
 Let $N\ge1$ and $\varphi(x):=|x|^{-\beta}$.
 \begin{itemize}
 \item[{\rm (i)}]
  There exist $\beta_1$, $\beta_2\in(0, N)$ with $\beta_1\le \beta_2$ 
  and
 $$
\beta_1 >
\left\{
\begin{array}{ll}
\displaystyle (N+1)/2 \quad & \quad \mbox{if}\quad N\ge 3,\\
\displaystyle 1/2 \quad & \quad \mbox{if}\quad N=2,\\
\displaystyle 7/16 \quad & \quad \mbox{if}\quad N=1,\\
\end{array}
\right.
$$ 
such that
  \begin{alignat}{2}
  \label{eq:1.7}
   [S(t)\varphi](x)>0 \quad &\text{in}\quad \R^N\times(0, \infty)  & & \quad \text{if}\quad \beta\in(0, \beta_1),\\
  \label{eq:1.8}
   \inf_{(x, t)\in \R^N\times(0, \infty)}&[S(t)\varphi](x)<0  & &\quad \text{if}\quad \beta\in(\beta_2, N). 
  \end{alignat}
 \item[{\rm (ii)}]
  Assume that $[S(t)\varphi](x)>0$ for $(x, t)\in\R^N\times(0, \infty)$.
  Then there exists $K_*=K_*(N, \beta)>0$ such that
  \begin{equation}
  \label{eq:1.9}
   [S(t)\varphi](x)\ge \dfrac{K_*}{|x|^\beta+t^{\beta/4}} \quad \text{for}\quad (x, t)\in\R^N\times(0, \infty).
  \end{equation}
   \item[{\rm (iii)}] For any $\beta\in (0,N)$ there  exists $K^*=K^*(N, \beta)>0$ such that
    \begin{equation}
  \label{eq:1.10}
   \Big| [S(t)\varphi](x)\Big|\le \dfrac{K^*}{|x|^\beta+t^{\beta/4}} \quad \text{for}\quad (x, t)\in\R^N\times(0, \infty).
  \end{equation}
 \end{itemize}
\end{Theorem}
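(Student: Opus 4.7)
The plan is to reduce Theorem~\ref{Theorem:1.2} to the self-similar profile
$V_\beta(y):=[S(1)|\cdot|^{-\beta}](y)$,
so that by the parabolic scaling of $\partial_t+(-\Delta)^2$ one has $[S(t)|\cdot|^{-\beta}](x)=t^{-\beta/4}V_\beta(x/t^{1/4})$; the mixed denominator $|x|^\beta+t^{\beta/4}$ in (ii)--(iii) is merely the rescaled form of $1+|y|^\beta$. Since $\mathcal{F}[|x|^{-\beta}](\xi)=c_{N,\beta}|\xi|^{\beta-N}$ with $c_{N,\beta}>0$ for $\beta\in(0,N)$, the $N$-dimensional radial Fourier inversion gives the Hankel representation
\[
V_\beta(y)=\tilde c_{N,\beta}|y|^{-(N-2)/2}\int_0^\infty s^{\beta-N/2}e^{-s^4}J_{(N-2)/2}(|y|s)\,ds,
\]
on which all of the subsequent analysis is based.

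For the positivity part of (i) I treat three dimensional ranges separately. For $N\ge 3$, Theorem~\ref{Theorem:1.1} applies directly: condition (c) becomes the nonincreasingness of $\psi(s)=c_{N,\beta}s^{\beta-(N+1)/2}$, equivalent to $\beta\le(N+1)/2$, which yields $V_\beta>0$ throughout this closed interval. An openness argument, based on joint continuity of $V_\beta$ in $(\beta,y)$ together with the uniform asymptotic $V_\beta(y)=|y|^{-\beta}(1+o(1))$ as $|y|\to\infty$, extends positivity strictly past the threshold, producing $\beta_1>(N+1)/2$. For $N=2$ (where Theorem~\ref{Theorem:1.1} does not apply) I invoke the Bessel identity $\tfrac{d}{dx}[x^{-\nu}J_\nu(x)]=-x^{-\nu}J_{\nu+1}(x)$ with $\nu=(N-2)/2$ to derive the dimension-walking recurrence
\[
V_\beta^{(N)}(r)=\frac{1}{K_{N,\beta}}\int_r^\infty s\,V_{\beta+2}^{(N+2)}(s)\,ds,
\]
so that positivity of $V_{\beta+2}^{(N+2)}$ in dimension $N+2$ transfers to $V_\beta^{(N)}$. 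Applied with $N=2$ and Theorem~\ref{Theorem:1.1} in dimension $4$ (valid for $\beta+2\le 5/2$), this gives $V_\beta^{(2)}>0$ for $\beta\in(0,1/2]$, and openness yields $\beta_1>1/2$. For $N=1$ this dimension walk is vacuous (Theorem~\ref{Theorem:1.1} in dim $3$ requires $\gamma\le 2$, so $\gamma=\beta+2$ admits no positive $\beta$), and I instead use the explicit cosine representation
\[
V_\beta^{(1)}(y)=2c_{1,\beta}\int_0^\infty\cos(y\xi)e^{-\xi^4}\xi^{\beta-1}\,d\xi
\]
together with Polya's criterion: the cosine transform of a nonnegative, nonincreasing and convex function on $(0,\infty)$ is nonnegative. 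A direct computation of $(e^{-\xi^4}\xi^{\beta-1})''$ reduces convexity to nonnegativity of the quadratic $16w^2-4(2\beta+1)w+(\beta-1)(\beta-2)$ in $w=\xi^4\ge 0$, whose discriminant equals $16(16\beta-7)$; convexity therefore holds iff $\beta\le 7/16$, and openness again gives $\beta_1>7/16$.

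For the negativity part of (i) and for parts (ii)--(iii): as $\beta\uparrow N$, the weight $s^{\beta-N/2}$ in the Hankel integral concentrates low-frequency mass where $J_{(N-2)/2}$ has sign-changing tails, and a quantitative version of this (along the lines of the sign-change arguments in \cite{GG,FGG}) produces a point $y_*$ with $V_\beta(y_*)<0$ for $\beta$ close to $N$, establishing $\beta_2\in(0,N)$. The upper bound (iii) follows from the convolution formula $V_\beta(y)=\int_{\R^N}G(y-z,1)|z|^{-\beta}\,dz$ by a three-region splitting ($|z|<|y|/2$, $|z-y|<|y|/2$, and the bulk), using the polynomial decay of the biharmonic heat kernel and the local integrability of $|z|^{-\beta}$; rescaling gives the denominator $t^{\beta/4}+|x|^\beta$. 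Under the positivity hypothesis of (ii), the matching lower bound follows from continuity of $V_\beta$, from $V_\beta(0)=C\int_0^\infty s^{\beta-1}e^{-s^4}\,ds>0$, and from the leading asymptotic $V_\beta(y)=|y|^{-\beta}(1+o(1))$ at infinity, together yielding $V_\beta(y)\ge K_*/(1+|y|^\beta)$.

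The main technical obstacle will be the positivity range in dimension $N=1$: dimension walking is powerless there, so the exact threshold $7/16$ must be extracted from Polya's criterion alone, and the specific value arises from the algebraic coincidence that the convexity quadratic has discriminant $16(16\beta-7)$. A secondary subtlety is establishing the strictly positive large-$|y|$ asymptotic uniformly in $\beta$ in a neighbourhood of each threshold value, since this is what upgrades the closed intervals produced by Theorem~\ref{Theorem:1.1}, by the dimension walk, and by Polya's criterion into the open strict interval $(0,\beta_1)$ required in (i), and is what underpins the matching lower bound (ii).
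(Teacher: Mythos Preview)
Your proposal is correct and follows essentially the same route as the paper: the self-similar reduction to a one-variable profile, Theorem~\ref{Theorem:1.1} for $N\ge3$, the dimension walk $N\to N+2$ for $N=2$, a direct cosine-integral argument for $N=1$, an openness/compactness argument to push past the closed thresholds, and the asymptotics at $0$ and $\infty$ for (ii)--(iii).

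Two remarks on where your presentation diverges from the paper's. First, for $N=1$ your Polya-criterion computation is literally the same calculation the paper performs: the paper integrates by parts once to obtain a sine integral with weight $-\tfrac{d}{ds}[e^{-s^4}s^{\beta-1}]$ and then checks this weight is nonincreasing, which is exactly the convexity of $e^{-s^4}s^{\beta-1}$; your discriminant $16(16\beta-7)$ and the paper's monotonicity check are the same condition. Second, your argument for the existence of $\beta_2$ is muddled (the weight $s^{\beta-N/2}$ does not ``concentrate low-frequency mass'' as $\beta\uparrow N$). The paper's argument is much simpler and you should use it: at $\beta=N$ the Hankel integral is, up to normalisation, $\eta^N f_N(\eta)$ with $f_N$ the biharmonic kernel profile, which is known to change sign; continuity of the profile in $\beta$ then yields sign change for $\beta$ slightly below $N$. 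For (iii), your spatial three-region splitting of the convolution is a legitimate alternative to the paper's Hankel-side asymptotics (Lemma~\ref{Lemma:3.2}), though the latter gives (ii) and (iii) simultaneously with less work once the representation \eqref{eq:3.3} is in hand.
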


In particular, Theorem~\ref{Theorem:1.2}~(i) gives an extension of Proposition~\ref{Proposition:1.1}.
Moreover, we deduce from \eqref{eq:1.8} that the condition $\beta\in(0, \beta_1)$ cannot be extended to $\beta\in(0, N)$.

Moreover, Theorem~\ref{Theorem:1.2} is  applied to show (to the best of our knowledge
for the first time) the existence of global-in-time \emph{positive} solutions to the Cauchy problem for the following fourth order semilinear parabolic equation:
\begin{alignat}{2}
\label{eq:1.11}
 & \pd_t u+(-\lap)^2 u =|u|^{p-1}u & &\quad \text{in} \quad \R^N\times(0, \infty),\\
\label{eq:1.12}
 & u(\cdot, 0) =\varepsilon\varphi(\cdot) & & \quad \text{in}\quad \R^N,
\end{alignat}
where $N\ge1$, $\varphi>0$ is a ``suitable'' measurable function, $\varepsilon>0$ is a parameter, and 
$$   
p>1+\frac{4}{N}.
$$
This ``super-Fujita'' condition is necessary in order to have global positive solutions
because Egorov and coauthors showed in\ \cite[Theorem~1.1]{EGKP}   finite time blow up of any positive solution
in the ``sub-Fujita'' case $1<p\le 1+4/N$.
See the ground breaking work \cite{F} of Fujita for second order analogues.

We first make clear that we understand  the notion of solution to 
problem~\eqref{eq:1.11}-\eqref{eq:1.12} in the strong sense:

\begin{Definition}
\label{Definition:1.1}
 Let $\varphi$ be locally integrable and bounded at infinity and $\varepsilon>0$.
 We say that $u\in C((0, \infty); BC(\R^N))$ is a global-in-time solution to problem~\eqref{eq:1.11}-\eqref{eq:1.12} if $u$ satisfies
 \begin{equation}
 \label{eq:1.13}
  u(x, t)=\varepsilon[S(t)\varphi](x)+\int^t_0[S(t-s)F_p(u(s))](x)\,ds
 \end{equation}
 for $(x, t)\in\R^N\times(0,\infty)$, where $F_p(\xi):=|\xi|^{p-1}\xi$.
\end{Definition}
Here, $BC(\R^N)$ denotes the space of bounded continuous functions.

Global existence of presumably sign changing solutions for similar problems was studied first by Caristi and Mitidieri in \cite{CM}.
As for eventual local positivity the following  was proved in \cite[Theorem~1.4]{FGG}:
For $\varphi$ given by \eqref{eq:1.5} with $\beta\in(4/(p-1), N)$ and $g\in A_\beta$
and $\varepsilon>0$ small enough, there exists a global-in-time solution~$u$ to 
problem~\eqref{eq:1.11}-\eqref{eq:1.12}, which is eventually locally positive.
However, to the best of our knowledge, there is no result for the existence of 
\emph{globally} positive solutions to problem~\eqref{eq:1.11}-\eqref{eq:1.12}.
Therefore, similarly to Problem~\ref{Problem:A}, it is also natural to ask the following question:
\begin{Problem}
\label{Problem:B}
 Are there initial data $\varphi$ such that there exists 
 a global-in-time positive solution to problem~\eqref{eq:1.11}-\eqref{eq:1.12}{\rm ?}
\end{Problem}

As an application of Theorem~\ref{Theorem:1.2}~(ii), we have:
\begin{Theorem}
\label{Theorem:1.3}
 Let $N\ge1$ and $p>1+4/N$.
 Set $\beta:=4/(p-1)$ and $\varphi(x):=|x|^{-\beta}$.
 Assume that
 \begin{equation}
 \label{eq:1.14}
  [S(t)\varphi](x)>0 \quad \text{for}\quad (x, t)\in\R^N\times(0, \infty).
 \end{equation}
 Then for sufficiently small $\varepsilon>0$, there exists a global-in-time solution $u$ to problem~\eqref{eq:1.11}-\eqref{eq:1.12} such that
 \begin{equation}
  \label{eq:1.15}
   u(x, t)\ge \dfrac{\varepsilon M_*}{|x|^\beta+t^{\beta/4}}\quad \text{for}\quad (x, t)\in\R^N\times(0, \infty),
 \end{equation}
 where $M_*>0$ depends only on $N$ and $p$.
\end{Theorem}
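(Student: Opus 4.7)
The plan is to construct the solution to \eqref{eq:1.13} by a contraction mapping argument in a weighted space built around the self-similar profile
\[
V(x,t) := \frac{1}{|x|^\beta + t^{\beta/4}}.
\]
The choice $\beta = 4/(p-1)$ makes \eqref{eq:1.13} invariant under the scaling $u(x,t) \mapsto \lambda^\beta u(\lambda x, \lambda^4 t)$, and $V$ scales consistently, namely $V(\lambda x, \lambda^4 t) = \lambda^{-\beta} V(x,t)$.

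The central analytic ingredient is the bilinear estimate
\[
\int_0^t \int_{\R^N} |G(x-y, t-s)|\, V(y,s)^p \, dy\, ds \;\leq\; K\, V(x,t)
\]
for some $K = K(N,p)>0$. The left-hand side inherits the scaling $\lambda^{-\beta}$ under $(x,t)\mapsto(\lambda x,\lambda^4 t)$ precisely because $\beta p = \beta + 4$, so the estimate reduces to proving that its $t=1$ slice is bounded on compact sets and decays like $|x|^{-\beta}$ as $|x|\to\infty$. I would derive this from a standard polynomial upper bound of the form $|G(x,t)| \leq C t^{-N/4}(1+|x|t^{-1/4})^{-N-\kappa}$ (any fixed $\kappa>0$ suffices) by decomposing the spatial integration into the regions $\{|y|\leq |x|/2\}$, $\{|x|/2\leq|y|\leq 2|x|\}$ and $\{|y|\geq 2|x|\}$; convergence of the remaining $s$-integration uses essentially the super-Fujita condition $\beta\in(0,N)$.

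Granted the bilinear estimate, I would set up the fixed-point problem as follows. Let $K_*,K^*$ denote the constants from Theorem~\ref{Theorem:1.2}(ii)--(iii) and set $M := 2K^*$. Introduce the complete metric space
\[
X_M := \Bigl\{u\in C(\R^N\times(0,\infty)) : \tfrac{\varepsilon K_*}{2} V(x,t) \leq u(x,t) \leq M\varepsilon V(x,t) \text{ for all } (x,t)\Bigr\},
\]
equipped with the distance $d(u,v) := \sup_{(x,t)} V(x,t)^{-1}|u(x,t)-v(x,t)|$. Hypothesis \eqref{eq:1.14} combined with Theorem~\ref{Theorem:1.2}(ii)--(iii) gives $\varepsilon K_* V \leq \varepsilon[S(t)\varphi] \leq \varepsilon K^* V$. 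For $u\in X_M$ one has $F_p(u)=u^p \leq (M\varepsilon)^p V^p$, and the bilinear estimate yields
\[
\left|\int_0^t [S(t-s) F_p(u(s))](x)\, ds\right| \leq K(M\varepsilon)^p V(x,t).
\]
Therefore, choosing $\varepsilon$ so small that $K M^p \varepsilon^{p-1} \leq \min\{K^*, K_*/2\}$ forces the Duhamel map
\[
\Phi[u](x,t) := \varepsilon[S(t)\varphi](x) + \int_0^t[S(t-s)F_p(u(s))](x)\, ds
\]
to satisfy $\Phi(X_M)\subseteq X_M$. The mean-value inequality $|F_p(a)-F_p(b)|\leq p(|a|\vee|b|)^{p-1}|a-b|$ together with the same bilinear estimate then gives $d(\Phi[u],\Phi[v])\leq pK(M\varepsilon)^{p-1} d(u,v)$, which is a strict contraction after a further small-$\varepsilon$ adjustment. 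Banach's fixed-point theorem then produces a unique $u\in X_M$ with $\Phi[u]=u$; the lower bound built into $X_M$ is exactly \eqref{eq:1.15} with $M_* = K_*/2$. The regularity $u\in C((0,\infty);BC(\R^N))$ follows from $u \leq M\varepsilon t^{-\beta/4}$ and a routine dominated-convergence argument applied to \eqref{eq:1.13}.

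The principal obstacle is the bilinear estimate. Self-similarity fixes the target exponent $\beta$ on the right-hand side, but verifying finiteness of the constant $K$ requires a careful region-based analysis against the \emph{sign-changing} kernel $G$. The sign issue itself is bypassed by estimating $|G|$ pointwise, but doing so discards all cancellation and forces one to work with strictly positive integrands; both the algebraic identity $\beta p = \beta + 4$ and the super-Fujita bound $\beta<N$ intervene to ensure the time integral converges.
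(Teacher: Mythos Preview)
Your strategy coincides with the paper's: a contraction argument for the Duhamel map $\Phi$ in the space weighted by $V(x,t)^{-1}$, resting on the bilinear estimate
\[
\int_0^t\!\int_{\R^N}|G(x-y,t-s)|\,V(y,s)^p\,dy\,ds\le K\,V(x,t),
\]
which is precisely Proposition~\ref{Proposition:4.1}. Two organisational differences deserve comment. First, you build the lower bound $u\ge\tfrac{\varepsilon K_*}{2}V$ into the fixed-point set itself, whereas the paper runs the contraction with only an upper bound (this is Theorem~\ref{Theorem:1.4}) and then recovers positivity a posteriori from \eqref{eq:1.9} together with the smallness of the Duhamel term; the two arrangements are equivalent. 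Second, and more substantively, your sketched proof of the bilinear estimate---a scaling reduction to $t=1$, a polynomial pointwise bound on $|G|$, and a three-annulus spatial decomposition in $y$---is more elementary and self-contained than the paper's, which uses the stretched-exponential bound \eqref{eq:2.2}, a sequence of changes of variable, and ultimately defers the hardest case (the bound on $|x|^\beta H$) to \cite[Lemmas~7.1--7.2]{FGG}. Your route avoids that external dependency at the cost of some suppressed casework (integrability of the $s$-integral near $s=0$ and $s=t$, and the dichotomy $\beta p<N$ versus $\beta p\ge N$ in the near-origin region), but those details are routine and your outline is sound.
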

Theorem~\ref{Theorem:1.2}~(i) implies that, for each 
\begin{equation}
\label{eq:1.16}
 p>1+\dfrac{4}{\beta_1},
\end{equation}
condition \eqref{eq:1.14} holds true.
Thus Theorem~\ref{Theorem:1.3} gives an affirmative answer to Problem~\ref{Problem:B}, even though under the restriction \eqref{eq:1.16}.

 Let $\varphi$ be as in Theorem~{\rm \ref{Theorem:1.3}}.
 Then $\varphi$ belongs to the weak Lebesgue space $L^{r_c, \infty}(\R^N)$, where 
 \begin{equation*}
  r_c:=\dfrac{N(p-1)}{4}>1.
 \end{equation*}
 The existence of a global-in-time solution to problem~\eqref{eq:1.11}-\eqref{eq:1.12} with sufficiently small $\varepsilon>0$ and $\varphi\in L^{r_c, \infty}(\R^N)$ is obtained in \cite[Theorem~3.4 and Remark~3.7]{FV-R} (see also {\rm \cite[Theorem 1.1]{IKK}}).
However, in order to prove Theorem~\ref{Theorem:1.3}, we need to study the decay of global-in-time solution to \eqref{eq:1.11}-\eqref{eq:1.12} (which are not necessarily positive).

\begin{Theorem}
\label{Theorem:1.4}
 Let $N\ge1$ and $p>1+4/N$.
 Set $\beta:=4/(p-1)$ and $\varphi(x):=|x|^{-\beta}$.
 Then for sufficiently small $\varepsilon>0$, there exists a global-in-time solution $u$ to problem~\eqref{eq:1.11}-\eqref{eq:1.12} satisfying the following$\colon$
 There exists $M^*=M^*(N, p)>0$ such that
 \begin{equation}
 \label{eq:1.17}
  |u(x, t)|\le \dfrac{\varepsilon M^*}{|x|^\beta+t^{\beta/4}}\quad \text{for}\quad (x, t)\in\R^N\times(0, \infty).
 \end{equation}
\end{Theorem}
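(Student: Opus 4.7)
The strategy is to construct $u$ by the Banach contraction principle applied to
$$\mathcal{N}(u)(x,t) := \ve[S(t)\vp](x) + \int_0^t [S(t-s) F_p(u(s))](x)\,ds$$
in the scale-invariant weighted space
$$X := \Big\{u \in C((0,\infty); BC(\R^N)) : \|u\|_X := \sup_{(x,t)\in\R^N\times(0,\infty)} \rho(x,t)\,|u(x,t)| < \infty\Big\},$$
where $\rho(x,t) := |x|^\beta + t^{\beta/4}$. The weight is chosen because the natural scaling $u(x,t)\mapsto\lambda^\beta u(\lambda x,\lambda^4 t)$ of \eqref{eq:1.11} (which exploits $\beta=4/(p-1)$) preserves $\|\cdot\|_X$. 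Theorem~\ref{Theorem:1.2}~(iii) gives immediately the linear estimate $\|\ve S(\cdot)\vp\|_X \le \ve K^*$.

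For the nonlinear part $\mathcal{D}(u) := \mathcal{N}(u) - \ve S(\cdot)\vp$, the pointwise inequality $|F_p(\xi)-F_p(\eta)| \le C_p(|\xi|^{p-1}+|\eta|^{p-1})|\xi-\eta|$ together with $|u(y,s)|\le\|u\|_X/\rho(y,s)$ reduces both $\|\mathcal{D}(u)\|_X \le C_1\|u\|_X^p$ and the Lipschitz bound $\|\mathcal{D}(u)-\mathcal{D}(v)\|_X \le C_2(\|u\|_X^{p-1}+\|v\|_X^{p-1})\|u-v\|_X$ to the single key estimate
$$\mathcal{I}(x,t) := \int_0^t\!\int_{\R^N}|G(x-y,t-s)|\,\rho(y,s)^{-p}\,dy\,ds \le \frac{C}{\rho(x,t)}.$$
Using $G(\lambda y,\lambda^4\tau) = \lambda^{-N}G(y,\tau)$ and $p\beta = \beta+4$, one checks that the product $\rho(x,t)\,\mathcal{I}(x,t)$ is invariant under $(x,t)\mapsto(\lambda x,\lambda^4 t)$, so it suffices to show $(|\xi|^\beta+1)\,\mathcal{I}(\xi,1) \le C$ uniformly in $\xi\in\R^N$.

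For the $s$-integral one splits at $s=1/2$ and uses Young's inequality $\|S(\tau)f\|_\infty \le C\,\tau^{-N/(4q)}\|f\|_{L^q}$ (valid for every $q\in[1,\infty]$ since $G_1 := G(\cdot,1)$ is bounded and integrable). The scaling estimate $\|\rho(\cdot,s)^{-p}\|_{L^q} \le C\,s^{N/(4q)-p\beta/4}$ then produces integrable $s$-singularities provided one picks $q_1\in(N/(\beta+4), N/\beta)$ on $(0,1/2)$ and $q_2\in(N/4,\infty)$ on $(1/2,1)$; each choice is non-empty, and both contributions carry the expected $t^{-\beta/4}$ scaling, which handles the inner regime $|\xi|\le 1$. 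For the outer regime $|\xi|>1$ one splits the $y$-integral at $|y|=|\xi|/2$: on $\{|y|\ge|\xi|/2\}$ one bounds $\rho(y,s)^{-p}\le C|\xi|^{-p\beta}$ and uses the uniform-in-$\tau$ $L^1$ bound $\|G(\cdot,\tau)\|_{L^1}\le C$ (immediate from self-similarity), giving a $|\xi|^{-p\beta}\le C|\xi|^{-\beta}$ contribution; on $\{|y|\le|\xi|/2\}$ one has $|x-y|\ge|\xi|/2$ and the rapid decay of $G(z,\tau)$ in $|z|/\tau^{1/4}$ (polynomial of arbitrary order, from the standard asymptotics of the biharmonic kernel) again yields the same $|\xi|^{-\beta}$ bound.

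With these estimates in hand, for any $M>0$ one has $\|\mathcal{N}(u)\|_X \le \ve K^* + C_1 M^p$ and $\|\mathcal{N}(u)-\mathcal{N}(v)\|_X \le 2C_2 M^{p-1}\|u-v\|_X$ on the closed ball $B_M := \{u\in X : \|u\|_X\le M\}$. Taking $M := 2\ve K^*$ and then $\ve>0$ small enough (possible since $p>1$) makes $\mathcal{N}$ a strict contraction $B_M \to B_M$; its unique fixed point solves \eqref{eq:1.13} and satisfies $|u(x,t)|\le M/\rho(x,t)$, i.e.\ \eqref{eq:1.17} with $M^* := 2K^*$. The continuity $u\in C((0,\infty);BC(\R^N))$ is inherited from $S(\cdot)\vp$ and the Duhamel term in the standard way. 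The main technical obstacle is the scale-invariant estimate on $\mathcal{I}$, whose outer-region analysis relies on sharp pointwise decay bounds for the biharmonic kernel $G$.
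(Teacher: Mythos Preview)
Your overall strategy coincides with the paper's: contraction mapping for the Duhamel operator on the closed ball of radius $2\varepsilon K^*$ in the scale-invariant weighted space with weight $\rho(x,t)=|x|^\beta+t^{\beta/4}$, using Theorem~\ref{Theorem:1.2}~(iii) for the linear part. The substantive difference lies in how you prove the key nonlinear estimate $\rho(x,t)\,\mathcal{I}(x,t)\le C$, which in the paper is Proposition~\ref{Proposition:4.1}. The paper establishes the $t^{\beta/4}$-weighted bound by an explicit change of variables and a three-part decomposition $A_1+A_2+A_3$, and the $|x|^\beta$-weighted bound by a rather intricate sequence of substitutions that ultimately reduces the question to \cite[Lemmas~7.1,~7.2]{FGG}. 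Your route is more self-contained: after the same scaling reduction to $t=1$, you obtain the uniform bound (covering $|\xi|\le 1$) by Young's inequality with two different exponents on $(0,1/2)$ and $(1/2,1)$, and the decay for $|\xi|$ large by splitting the $y$-integral at $|y|=|\xi|/2$ and invoking the pointwise decay of $G$ from \eqref{eq:2.2}. This avoids the external lemmas and is arguably cleaner; the paper's computation, on the other hand, tracks the bounds more explicitly and connects to the existing literature.

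One point where your sketch is terse and deserves care: in the region $|y|\le|\xi|/2$, the integral $\int_{|y|\le|\xi|/2}\rho(y,s)^{-p}\,dy$ behaves differently according to whether $p\beta=\beta+4$ is larger or smaller than $N$ (it equals $Cs^{(N-\beta-4)/4}$ in the former case and $C|\xi|^{N-\beta-4}$ in the latter). In both cases, choosing the polynomial decay order $M$ for $|G|$ larger than $\max(N-4,\beta)$ does yield the required $|\xi|^{-\beta}$ bound after a short computation, so your conclusion stands; but the sentence ``again yields the same $|\xi|^{-\beta}$ bound'' hides this case distinction. Apart from this, your argument is correct.
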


\begin{Remark}
	It is a natural question to ask whether our results  can be generalised to 
	Cauchy problems where the biharmonic operator is replaced by the polyharmonic operator 
	$(-\Delta)^m$ with $m > 1$. For related questions, results in this direction have already 
	been obtained. Indeed,  
	Ferreira and  Villamizar-Roa show in \cite{FV-R} well-posedness for
	problem~\eqref{eq:1.11}-\eqref{eq:1.12} with $(-\Delta)^m$  instead of $(-\Delta)^2$ and 
	assuming $p>1+(2m)/N$. They allow even for 
	any fractional $m >0$. Concerning problem \eqref{eq:1.1}-\eqref{eq:1.2} with $(-\Delta)^m$ 
	instead of $(-\Delta)^2$ and compactly supported nonnegative initial datum, 
	Ferreira and Ferreira prove in \cite{FF} eventual local positivity for any fractional polyharmonic operator (i.e. $m>1$) thereby solving \cite[Problem 10]{BG}
	mentioned by Barbatis and Gazzola. In view of the techniques developed in these papers we are 
	confident that the present paper can be extended to the general polyharmonic framework.
\end{Remark}

The rest of this paper is organised as follows. 
In Section~\ref{Section:2} we recall several properties of the fundamental solution $G$, of the Fourier transform of radially symmetric functions, and of Bessel functions.
In Section~\ref{Section:3} we prove Theorems~\ref{Theorem:1.1} and \ref{Theorem:1.2}. 
Section~\ref{Section:4} is devoted to the proofs of Theorems~\ref{Theorem:1.3} and \ref{Theorem:1.4}.

%%%%%%%%%%%%%%%%%%%%%%%%%%%%%%%%%%%%%%%%%%%%%%%%%%%%%
%%%%%%%%%%%%%%%%%%%%%%%%%%%%%%%%%%%%%%%%%%%%%%%%%%%%%
\section{Preliminaries} \label{Section:2}
%%%%%%%%%%%%%%%%%%%%%%%%%%%%%%%%%%%%%%%%%%%%%%%%%%%%%
%%%%%%%%%%%%%%%%%%%%%%%%%%%%%%%%%%%%%%%%%%%%%%%%%%%%%

In this section, we recall some properties of the fundamental solution $G$, of the Fourier transform of radially symmetric functions, and of Bessel functions 
which will be useful in order to prove our results.

%%%%%%%%%%%%%%%%%%%%%%%%%%%%%%%%%%%%%%%%%%%%%%%%%%%%%
\subsection{Fundamental solution $G$} \label{Subsection:2.1}
%%%%%%%%%%%%%%%%%%%%%%%%%%%%%%%%%%%%%%%%%%%%%%%%%%%%%

We collect properties of the fundamental solution $G$ without  proof (for  details, see e.g. \cite{FGG,GG,GP}).
Let $J_\mu$ be the   $\mu$-th Bessel function of the first kind.
Then $G$ is given by
\begin{equation*}
 G(x, t)=\dfrac{\va_N}{t^{N/4}}f_N\biggl(\dfrac{|x|}{t^{1/4}}\biggr)
\end{equation*}
for $x\in\R^N$ and $t>0$, where $\alpha_N:=(2\pi)^{-N/2}$ is a normalisation constant and
\begin{equation}
\label{eq:2.1}
 \begin{split}
  f_N(\eta):\!\!
  &=\eta^{1-N}\int^\infty_0e^{-s^4}(\eta s)^{N/2}J_{(N-2)/2}(\eta s)\,ds\\
  &=\eta^{-N}\int^\infty_0\exp\biggl[-\dfrac{s^4}{\eta^4}\biggr]s^{N/2}J_{(N-2)/2}(s)\,ds
 \end{split}
\end{equation}
for $\eta>0$.
It is known that $f_N$ changes sign infinitely many times, see \cite[Theorem 2.3]{FGG}.

In what follows the constants $c_i>0$ $(i=1, 2, 3)$  depend only on $N$.
\begin{itemize}
\item
 For $t>0$, the function $G (\cdot, t)$ belongs to Schwartz space $\mathcal{S}$.
 More precisely, $f_N$ satisfies
 \begin{equation}
 \label{eq:2.2}
  f_N'(\eta)=-\eta f_{N+2}(\eta),\quad|f_N(\eta)|\le c_1\exp\Bigl[-c_2\eta^{4/3}\Bigr], \quad \text{for}\quad \eta>0.
 \end{equation}
\item
 For $t>0$, it holds that
 \begin{equation}
 \label{eq:2.3}
  \mathcal{F}[G(\cdot, t)](\xi)=(2\pi)^{-N/2} e^{-|\xi|^4t}
 \end{equation}
 for $\xi\in\R^N$.
 Here, $\mathcal{F}$ denotes the Fourier transform defined in \eqref{eq:1.6}.
\end{itemize}

%%%%%%%%%%%%%%%%%%%%%%%%%%%%%%%%%%%%%%%%%%%%%%%%%%%%%
\subsection{Fourier transform of radially symmetric function} \label{Subsection:2.2}
%%%%%%%%%%%%%%%%%%%%%%%%%%%%%%%%%%%%%%%%%%%%%%%%%%%%%

To show positivity of $S(t)\varphi$, we use the representation of the Fourier transform of radially symmetric functions. 
According to \cite[Theorem 9.10.5]{AAR}
the Fourier transform of $f(x)=g(|x|)\in L^1(\R^N)$ is given by
\begin{equation}
\label{eq:2.4}
 \mathcal{F}[f](\xi)=|\xi|^{-(N-2)/2}\int^\infty_0s^{N/2}g(s)J_{(N-2)/2}(|\xi|s)\,ds
\end{equation}
for $\xi\in\R^N$.
Moreover, $\mathcal{F}[f]$ is also radially symmetric.
In what follows, we write $\mathcal{F}[f](\xi)=\mathcal{F}[f](|\xi|)$.

%%%%%%%%%%%%%%%%%%%%%%%%%%%%%%%%%%%%%%%%%%%%%%%%%%%%%
\subsection{Properties of Bessel functions} \label{Subsection:2.3}
%%%%%%%%%%%%%%%%%%%%%%%%%%%%%%%%%%%%%%%%%%%%%%%%%%%%%

We collect some properties of Bessel functions from  \cite[Chapter 4]{AAR}.
The Bessel function $J_\mu$ (of the first kind) satisfies the formulas
\begin{alignat}{2}
\label{eq:2.5}
 J_\mu(\eta)
 &=\sum_{k=0}^\infty\dfrac{(-1)^{k}}{\Gamma(k+1)\Gamma(k+\mu+1)}\Bigl(\dfrac{\eta}{2}\Big)^{2k+\mu}
 & &\quad \text{if}\quad \mu>-1,\\
\label{eq:2.6}
 J_{\mu}(\eta)
 &=\dfrac{1}{\sqrt{\pi}\Gamma(\mu+1/2)}\Bigl(\dfrac{\eta}{2}\Bigr)^\mu\int^\pi_0\cos(\eta\cos\theta)\sin^{2\mu}\theta\,d\theta
 & &\quad \text{if}\quad \mu>-1/2,
\end{alignat}
for $\eta>0$. See \cite[(4.5.2) and Corollary 4.11.2]{AAR}.
In particular, we observe from \eqref{eq:2.5} with $\mu=-1/2$ that
\begin{equation}
\label{eq:2.7}
 J_{-\frac{1}{2}}(\eta)=\sqrt{\dfrac{2}{\pi \eta}}\cos\eta
\end{equation}
for $\eta>0$.
It follows from \eqref{eq:2.5} that
\begin{equation}
\label{eq:2.8}
 J_\mu(\eta)=(\mu+1)\eta^{-1}J_{\mu+1}(\eta)+J'_{\mu+1}(\eta),
\end{equation}
\begin{equation}
\label{eq:2.9}
 J_\mu'(\eta)=\mu\,\eta^{-1}J_{\mu}(\eta)-J_{\mu+1}(\eta),
\end{equation}
\begin{equation}
\label{eq:2.10}
% \lim_{\eta\searrow0}\eta^{-\mu}J_\mu(\eta)=\Gamma(\mu+1)^{-1},
\lim_{\eta\searrow0}\eta^{-\mu}J_\mu(\eta)=2^{-\mu} \Gamma(\mu+1)^{-1},
\end{equation}
for $\mu>-1$.
Moreover, \eqref{eq:2.6} and \eqref{eq:2.7} imply that if $\mu\ge-1/2$, then
\begin{equation}
\label{eq:2.11}
 \sup_{0<\eta<\infty}\eta^{-\mu}|J_\mu(\eta)|<\infty.
\end{equation}
For large $\eta$, we also have the following asymptotic expansion for $\mu>-1$:
\begin{equation*}
 J_\mu(\eta)=\sqrt{\dfrac{2}{\pi\eta}}\cos\Bigl(\eta-\dfrac{\mu\pi}{2}-\dfrac{\pi}{4}\Bigr)+O(\eta^{-3/2})\quad \text{as}\quad \eta\to\infty.
\end{equation*}
Then we see that for $\mu\ge -1/2$
\begin{equation}
\label{eq:2.12}
 \sup_{0<\eta<\infty}\eta^{1/2}|J_\mu(\eta)|<\infty.
\end{equation}

We recall a monotonicity property of Bessel functions (\cite[Theorem 5.2]{LMS}).
Let $\{j_{\mu, k}\}^\infty_{k=1}$ be the zeroes of $J_\mu$ satisfying
\begin{equation*}
 0<j_{\mu, 1}<j_{\mu, 2}<\cdots<j_{\mu, k}<j_{\mu, k+1}<\cdots
\end{equation*}
and $j_{\mu, 0}:=0$.
Set
\begin{equation*}
 M_{\mu, k}:=\int^{j_{\mu, k+1}}_{j_{\mu, k}}W(s)s^{1/2}|J_\mu(s)|\,ds, \quad k\in\N\cup\{0\}.
\end{equation*}

\begin{Proposition}[{\cite[Theorem 5.2]{LMS}}]
\label{Proposition:2.1}
 Let $\mu\ge 1/2$.
 Let $W:(0, \infty)\to\R$ satisfy
 \begin{equation}
 \label{eq:2.13}
  \begin{aligned}
   & W(\eta)>0 \quad \text{and} \quad W'(\eta)\le 0 & & \quad \text{if}\quad \mu>\dfrac{1}{2},\\
   & W(\eta)>0 \quad \text{and} \quad W'(\eta)< 0 & & \quad \text{if}\quad \mu=\dfrac{1}{2},
  \end{aligned}
 \end{equation}
 for $\eta>0$ and
 \begin{equation}
 \label{eq:2.14}
  W(\eta)=O(\eta^\ve)\quad \text{as} \quad \eta \searrow 0,
 \end{equation}
 where $\ve>-3/2-\mu$.
 Then
 \begin{equation} \label{eq:2.15}
  M_{\mu, k}>M_{\mu, k+1} \quad \text{for}\quad k\in\N\cup\{0\}.
 \end{equation}
\end{Proposition}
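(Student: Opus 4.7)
The plan is to compare consecutive integrals $M_{\mu,k}$ and $M_{\mu,k+1}$ after mapping both onto a common parameter interval. Set $\delta_k := j_{\mu,k+1} - j_{\mu,k}$; the substitution $s = j_{\mu,k}+\tau$ yields
\[
M_{\mu,k} = \int_0^{\delta_k} W(j_{\mu,k}+\tau)\,(j_{\mu,k}+\tau)^{1/2}\,|J_\mu(j_{\mu,k}+\tau)|\,d\tau,
\]
and analogously for $M_{\mu,k+1}$ over $[0,\delta_{k+1}]$. A classical consequence of the Sturm comparison theorem, applied to the Liouville normal form $u''+p(s)u=0$ for $u(s)=s^{1/2}J_\mu(s)$ with $p(s):=1+(1/4-\mu^2)/s^2$, is that for $\mu\ge 1/2$ the coefficient $p$ is non-decreasing in $s$ (strictly so when $\mu>1/2$), whence the gap sequence $\{\delta_k\}$ is non-increasing; when $\mu=1/2$ we have $p\equiv 1$ and $\delta_k=\pi$ for all $k$.

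Next, I would carry out a hump-by-hump comparison. Writing $v_k(\tau):=u(j_{\mu,k}+\tau)$, each $v_k$ solves $v_k''+p_k(\tau)v_k=0$ with $p_k(\tau):=p(j_{\mu,k}+\tau)$, $v_k(0)=v_k(\delta_k)=0$, and $p_{k+1}\ge p_k$ pointwise. The energy $E(s):=(u'(s))^2+p(s)u(s)^2$ satisfies $E'(s)=p'(s)u(s)^2\ge 0$, so $E$ is non-decreasing. Combining this monotone energy with a Sturm–Picone/Wronskian identity applied to $v_k$ and the shifted hump $v_{k+1}$ gives, for any non-negative non-increasing test weight $\phi$, an integral domination of the form
\[
\int_0^{\delta_{k+1}}\phi(\tau)\,|v_{k+1}(\tau)|\,d\tau \;\le\; \int_0^{\delta_k}\phi(\tau)\,|v_k(\tau)|\,d\tau.
\]

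Applying this with $\phi(\tau)=W(j_{\mu,k}+\tau)$ and using the pointwise bound $W(j_{\mu,k+1}+\tau)\le W(j_{\mu,k}+\tau)$, which follows from $W'\le 0$ and the positivity of $W$, produces the chain $M_{\mu,k+1}\le M_{\mu,k}$. Strictness requires a little more care: when $\mu>1/2$ it comes from $\delta_{k+1}<\delta_k$ together with strict positivity of $W$; for the borderline case $\mu=1/2$, the explicit identity $s^{1/2}J_{1/2}(s)=\sqrt{2/\pi}\sin s$ reduces $M_{1/2,k}$ to $\sqrt{2/\pi}\int_0^\pi W(k\pi+\tau)\sin\tau\,d\tau$, and the assumption $W'<0$ in \eqref{eq:2.13} yields strict inequality directly. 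The growth condition \eqref{eq:2.14} is used only to ensure that $M_{\mu,0}$ is finite, by controlling the integrand near $\tau=0$ for the first hump (recall $j_{\mu,0}=0$).

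The main obstacle is the shape comparison at the heart of Step 2. Despite the monotonicity of $E$, the successive maxima of $|u(s)|=s^{1/2}|J_\mu(s)|$ are \emph{not} monotone in $k$: at a local maximum one has $u^2=E/p$, and both $E$ and $p$ increase toward their respective limits $2/\pi$ and $1$, so the approach of the maxima to $\sqrt{2/\pi}$ need not be monotone. Consequently a naive pointwise inequality $|v_{k+1}|\le|v_k|$ fails, and the delicate point is to prove that this failure is absorbed when integrated against any non-increasing positive weight, via the Lorch–Muldoon–Szegő style Wronskian identity sketched above. This is exactly the content of \cite[Theorem~5.2]{LMS}, on which the statement relies.
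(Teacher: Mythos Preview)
The paper does not prove this proposition at all: it is merely recalled from the literature as \cite[Theorem~5.2]{LMS}, with Remark~\ref{Remark:2.1} noting that assumption~\eqref{eq:2.14} serves only to make $M_{\mu,0}$ finite. There is therefore no ``paper's own proof'' to compare against.

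Your sketch correctly identifies the standard ingredients---the Liouville normal form $u(s)=s^{1/2}J_\mu(s)$, the monotonicity of $p(s)=1+(1/4-\mu^2)/s^2$ for $\mu\ge 1/2$, the Sturm comparison giving $\delta_{k+1}\le\delta_k$, and the non-decreasing energy $E=u'^2+pu^2$. The treatment of the borderline case $\mu=1/2$ via the explicit formula is fine, and you correctly observe that \eqref{eq:2.14} is needed only for the convergence of $M_{\mu,0}$.

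However, the argument is not a proof: the crucial inequality in your Step~2,
\[
\int_0^{\delta_{k+1}}\phi(\tau)\,|v_{k+1}(\tau)|\,d\tau \;\le\; \int_0^{\delta_k}\phi(\tau)\,|v_k(\tau)|\,d\tau
\]
for every non-negative non-increasing $\phi$, is asserted but never derived. You yourself flag in the final paragraph that a pointwise bound $|v_{k+1}|\le|v_k|$ is false in general, and then resolve the difficulty by appealing to ``exactly the content of \cite[Theorem~5.2]{LMS}''---which is the very statement you are trying to prove. This is circular. To make the argument self-contained you would need to actually carry out the Lorch--Muldoon--Szeg\H{o} Wronskian/integration-by-parts computation that produces the weighted hump inequality; as it stands, the proposal is an outline of where the proof lives rather than a proof.
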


\begin{Remark}
\label{Remark:2.1}
 The assumption \eqref{eq:2.14} is required to show that the integral $M_{\mu, 0}$ converges {\rm (}see {\rm \cite[Section~(ii)]{LMS}}{\rm )}.
 Thus \eqref{eq:2.14} is omitted if the integral $M_{\mu, 0}$ converges.
\end{Remark}

\begin{Remark}
\label{Remark:2.2}
 Assume that
 \begin{equation*}
  \int^\infty_0 W(s)s^{1/2}|J_\mu(s)|\,ds<\infty.
 \end{equation*}
 Then, it holds that
 \begin{equation*}
  \int^\infty_0 W(s)s^{1/2}J_\mu(s)\,ds=\sum^\infty_{k=0}(-1)^kM_{\mu, k}=\sum^\infty_{k=0}(M_{\mu, 2k}-M_{\mu, 2k+1}).
 \end{equation*}
 Hence \eqref{eq:2.15} leads to the positivity of the integral in the left hand side of the above equation.
\end{Remark}

%%%%%%%%%%%%%%%%%%%%%%%%%%%%%%%%%%%%%%%%%%%%%%%%%%%%%
%%%%%%%%%%%%%%%%%%%%%%%%%%%%%%%%%%%%%%%%%%%%%%%%%%%%%
\section{Existence of positive solutions to problem~\eqref{eq:1.1}-\eqref{eq:1.2}} \label{Section:3}
%%%%%%%%%%%%%%%%%%%%%%%%%%%%%%%%%%%%%%%%%%%%%%%%%%%%%
%%%%%%%%%%%%%%%%%%%%%%%%%%%%%%%%%%%%%%%%%%%%%%%%%%%%%

In this section, we prove the sufficient condition on $\varphi$ to ensure $[S(t)\varphi](x)>0$ for $(x, t)\in\R^N\times(0, \infty)$.
In what follows, the letter $C$  denotes generic positive constants and they may have different values even within the same line. 

%%%%%%%%%%%%%%%%%%%%%%%%%%%%%%%%%%%%%%%%%%%%%%%%%%%%%
\subsection{General initial data} \label{Subsection:3.1}
%%%%%%%%%%%%%%%%%%%%%%%%%%%%%%%%%%%%%%%%%%%%%%%%%%%%%

This section is devoted to the proof of Theorem~\ref{Theorem:1.1}.
\begin{proof}[{\bf Proof of Theorem~\ref{Theorem:1.1}}]
 Since by \eqref{eq:2.3}
 \begin{equation*}
  \mathcal{F}^{-1}[G(x-\cdot, t)](\xi)
  =e^{ix\cdot\xi}\mathcal{F}[G(\cdot, t)](\xi)
  =(2\pi)^{-N/2} e^{-t|\xi|^4+ix\cdot\xi}
 \end{equation*}
 for $x\in\R^N$ and $\xi\in\R^N$, we deduce from  \eqref{eq:2.4} and the assumption in Theorem~\ref{Theorem:1.1} that
 \begin{equation}
 \label{eq:3.1}
  \begin{split}
   [S(t)\varphi](x)
   &=\langle \varphi, G(x-\cdot, t)\rangle\\
   &=(2\pi)^{-N/2}\langle \mathcal{F}[\varphi], e^{-t|\cdot|^4+ix\cdot(\cdot)}\rangle\\
   &=(2\pi)^{-N/2}\int_{{\R^N}}\mathcal{F}[\varphi](|\xi|)e^{-t|\xi|^4+ix\cdot\xi}\,d\xi\\
   &=(2\pi)^{-N/2}\int_{{\R^N}}\left( \mathcal{F}[\varphi](|\xi|)e^{-t|\xi|^4}\right) e^{-ix\cdot\xi}\,d\xi\\
   &= |x|^{-(N-2)/2}\int^\infty_0s^{N/2}\mathcal{F}[\varphi](s)e^{-ts^4}J_{(N-2)/2}(|x|s)\,ds\\
   &= |x|^{-(N-2)/2}\int^\infty_0\psi(s)e^{-ts^4}s^{1/2}J_{(N-2)/2}(|x|s)\,ds\\
   &= |x|^{-(N+1)/2}\int^\infty_0\psi(|x|^{-1}s)\exp\bigg[-\dfrac{ts^4}{|x|^4}\bigg]s^{1/2}J_{(N-2)/2}(s)\,ds
  \end{split}
 \end{equation}
 for $(x, t)\in\R^N\times(0, \infty)$.
 By Proposition~\ref{Proposition:2.1} and Remark~\ref{Remark:2.2} it holds that $[S(t)\varphi](x)$ is positive for $(x, t)\in \R^N\times(0, \infty)$ if $N\ge3$.
\end{proof}
%%%%%%%%%%%%%%%%%%%%%%%%%%%%%%%%%%%%%%%%%%%%%%%%%%%%%
\subsection{Special initial data $\varphi(x)=|x|^{-\beta}$} \label{Subsection:3.2}
%%%%%%%%%%%%%%%%%%%%%%%%%%%%%%%%%%%%%%%%%%%%%%%%%%%%%

In this section, we prove Theorem~\ref{Theorem:1.2}.
To this end we consider another representation of $S(t)\varphi$.
Let $\beta\in(0, N)$.
Since $\mathcal{F}[\varphi](x)=c_{N, \beta}|x|^{\beta-N}$ in the sense of tempered distribution (see e.g. \cite[Proposition 4.64]{Mi}, $c_{N, \beta}=2^{N/2-\beta}
\Gamma((N-\beta)/2)/\Gamma(\beta/2)$), by an  argument similar to that in {\eqref{eq:3.1} in the proof of Theorem~\ref{Theorem:1.1} we have
\begin{equation*}
 \begin{split}
  [S(t)\varphi](x)
  & = c_{N, \beta}|x|^{-(N-2)/2}\int^\infty_0e^{-s^4t}s^{\beta-N/2}J_{(N-2)/2}(|x|s)\,ds\\
  & = c_{N, \beta}|x|^{-(N-2)/2}t^{-\beta/4+(N-2)/8}\int^\infty_0e^{-s^4}s^{\beta-N/2}J_{(N-2)/2}(|x|t^{-1/4}s)\,ds
 \end{split}
\end{equation*}
for $(x, t)\in\R^N\times(0, \infty)$.
Setting $E(s):=e^{-s^4}$ and
\begin{equation}
\label{eq:3.2}
 F_{N, \beta}(\eta):=\eta^{\beta-(N-2)/2}\int^\infty_0E(s)s^{\beta-N/2}J_{(N-2)/2}(\eta s)\,ds,
\end{equation}
we see that
\begin{equation}
\label{eq:3.3}
 [S(t)\varphi](x)=c_{N, \beta}|x|^{-\beta}F_{N, \beta}(|x|t^{-1/4})
\end{equation}
for $(x, t)\in\R^N\times(0, \infty)$.
Thus, in order to prove Theorem~\ref{Theorem:1.2}, it suffices to show that $F_{N, \beta}>0$.

The positivity statement will then be a direct consequence of Proposition~\ref{Proposition:2.1} and Remark~\ref{Remark:2.2} provided that $N\ge 3$. In order to cover also 
the small dimensions $N=1,2$, we need some preparations.
We remark that by a change of variables $F_{N, \beta}$ satisfies
\begin{equation}
\label{eq:3.4}
 F_{N, \beta}(\eta)=\int^\infty_0E(\eta^{-1}s)s^{\beta-N/2}J_{(N-2)/2}(s)\,ds
\end{equation}
for $\eta>0$.
We remark that $F_{N, \beta}$ can be also defined for $\beta\ge N$.
In the following, we consider $F_{N,\beta}$ with $N\ge1$ and $\beta>0$.

Since we observe from \eqref{eq:2.5} that
\begin{equation*}
 \lim_{s\searrow0}s^{\beta-N/2}J_{N/2}(\eta s)=0,
\end{equation*}
we see that by \eqref{eq:2.8} and \eqref{eq:3.2}
\begin{equation}
\label{eq:3.5}
 \begin{split}
  F_{N, \beta}(\eta)
  & =\eta^{\beta-N/2}\int^\infty_0E(s)s^{\beta-N/2-1}\biggl[\dfrac{N}{2}J_{N/2}(\eta s)+s\dfrac{d}{ds}\Bigl(J_{N/2}(\eta s)\Bigr)\biggr]\,ds\\
  & =(N-\beta)\eta^{\beta-N/2}\int^\infty_0E(s)s^{\beta-N/2-1}J_{N/2}(\eta s)\,ds\\
  & \quad +4\eta^{\beta-N/2}\int^\infty_0E(s)s^{\beta-N/2+3}J_{N/2}(\eta s)\,ds\\
  & =(N-\beta)F_{N+2, \beta}(\eta)+4\eta^{\beta-N/2}\int^\infty_0E(s)s^{\beta-N/2+3}J_{N/2}(\eta s)\,ds,
 \end{split}
\end{equation}
for $\eta>0$, $N\ge1$ and $\beta>0$.
In the following two lemmas we study the asymptotic behaviour of $F_{N,\beta}$
at $0$ and at $\infty$.
\begin{Lemma}
\label{Lemma:3.1}
 For $N\ge1$ and $\beta>0$
 \begin{equation}
 \label{eq:3.6}
  \lim_{\eta\to\infty}\eta^{\beta-N/2}\int^\infty_0E(s)s^{\beta-N/2+3}J_{N/2}(\eta s)\,ds=0.
 \end{equation}
\end{Lemma}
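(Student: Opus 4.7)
I would prove Lemma~\ref{Lemma:3.1} by iterated integration by parts, using Bessel identities to trade each factor of $\eta^{-1}$ for an increase by one in the order of the Bessel function, until the remaining integrals can be estimated sharply enough to yield the claimed limit.

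First I would factor the integrand as $e^{-s^4}\, s^{\beta - N + 2} \cdot s^{N/2 + 1} J_{N/2}(\eta s)$ and integrate by parts using the identity
\[
s^{N/2+1} J_{N/2}(\eta s) = \frac{1}{\eta}\, \frac{d}{ds}\bigl[s^{N/2+1} J_{N/2+1}(\eta s)\bigr],
\]
a consequence of the classical recurrence $(u^\mu J_\mu(u))' = u^\mu J_{\mu-1}(u)$. The boundary term at $s = 0$ vanishes because the product behaves like $\eta^{N/2+1} s^{\beta + 4}\to 0$ (since $\beta > 0$ and $J_{N/2+1}(\eta s)\sim c(\eta s)^{N/2+1}$), while the boundary at $s = \infty$ vanishes thanks to $e^{-s^4}$. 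The outcome is
\[
\int_0^\infty e^{-s^4} s^{\beta - N/2 + 3} J_{N/2}(\eta s)\, ds = \frac{1}{\eta}\int_0^\infty e^{-s^4} \bigl[(N - 2 - \beta) s^{\beta - N/2 + 2} + 4 s^{\beta - N/2 + 6}\bigr] J_{N/2+1}(\eta s)\, ds.
\]

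Next I would iterate this $k$ times to obtain a finite linear combination
\[
\eta^{-k} \sum_{j=0}^{k} c_j \int_0^\infty e^{-s^4}\, s^{\alpha_j} J_{N/2+k}(\eta s)\, ds
\]
with $\alpha_j = \beta - N/2 + 3 - k + 4j$ and explicit bounded coefficients $c_j$; the boundary terms remain harmless at every step because $\beta + 4 + 4j > 0$. Each inner integral I would estimate by the substitution $u = \eta s$, combined with the bounds $|J_\mu(u)| \le C u^\mu$ near zero and $|J_\mu(u)| \le C u^{-1/2}$ globally (from \eqref{eq:2.11}--\eqref{eq:2.12}), splitting the $u$-integral at $u = 1$ and at $u = \eta$ (the latter being where the $e^{-u^4/\eta^4}$ factor begins to matter). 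This yields
\[
\biggl|\int_0^\infty e^{-s^4}\, s^{\alpha_j} J_{N/2 + k}(\eta s)\, ds\biggr| \le C\, \eta^{-\min(\alpha_j + 1,\, 1/2)}.
\]
Multiplying by $\eta^{\beta - N/2 - k}$ gives a sum of terms $\eta^E$ with $E$ either $\le -4$ (when the minimum equals $\alpha_j + 1$) or equal to $\beta - N/2 - k - 1/2$ (when the minimum equals $1/2$). Picking $k > \beta - N/2 - 1/2$ makes every exponent strictly negative, so the full expression tends to $0$ as $\eta \to \infty$.

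The hard part will be the bookkeeping of the exponents $\alpha_j$ and the coefficients $c_j$ across many iterations, and checking that the Bessel bounds actually deliver the claimed estimate in every parameter regime; the borderline case $\alpha_j = -1/2$ produces a logarithmic factor, which however is absorbed by allowing one extra integration by parts. Conceptually this is just the quantitative version of the fact that the integral is (up to a power of $\eta$) the Fourier transform in $\mathbb{R}^{N+2}$ of the function $|x|^{\beta - N + 2} e^{-|x|^4}$, whose only singular point is the origin and whose Fourier transform therefore decays like $|\xi|^{-\beta - 4}$ at infinity.
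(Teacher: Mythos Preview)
Your proposal is correct and follows essentially the same strategy as the paper: iterated integration by parts via the Bessel recurrence to raise the order from $N/2$ to $N/2+k$, and then the bounds \eqref{eq:2.11}--\eqref{eq:2.12} to conclude once $k>\beta-(N+1)/2$. The paper's version keeps the derivatives $E^{(l)}(s)$ unexpanded (writing the $k$-th step as $\eta^{-k}\sum_{l=0}^{k} a^{k}_l\int_0^\infty E^{(l)}(s)\,s^{\beta+l+3-N/2-k}J_{N/2+k}(\eta s)\,ds$) and treats the ranges $\beta<(N+1)/2$ and $\beta\ge (N+1)/2$ separately, but the argument is otherwise the same as yours.
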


\begin{proof}
 We prove this lemma by means of an inductive argument.
 Let $N\ge1$ and $\beta>0$.
 We first claim that for $k\in\N\cup\{0\}$ there exists $\{a^{k}_{l}\}^{k}_{l=0}\subset\R$ such that for $\eta>0$
 \begin{equation}
 \label{eq:3.7}
  \begin{split}
   & \int^\infty_0E(s)s^{\beta-N/2+3}J_{N/2}(\eta s)\,ds\\
   &\quad =\eta^{-k}\sum^k_{l=0}a^{k}_l\int^\infty_0E^{(l)}(s)s^{\beta+l+3-N/2-k}J_{N/2+k}(\eta s)\,ds.
  \end{split}
 \end{equation}
 It is clear that \eqref{eq:3.7} holds for $k=0$.
 
 Assume that \eqref{eq:3.7} holds for some $k_*\in\N\cup\{0\}$.
 Similarly to \eqref{eq:3.5}, we have
 %I calculated different coefficients, please check.
 %\begin{align*}
  % &\int^\infty_0E(s)s^{\beta-N/2+3}J_{N/2}(\eta s)\,ds\\
  % &=\eta^{-k_*-1}\sum^{k_*}_{l=0}a^{k_*}_l\biggl[\biggl(\dfrac{N}{2}+k_*+1\biggr)
  %\int^\infty_0E^{(l)}(s)s^{\beta+l+2-N/2-k_*}J_{N/2+k_*+1}(\eta s)\,ds\\
   %&\quad+\int^\infty_0E^{(l)}(s)s^{\beta+l+3-N/2-k_*}\dfrac{d}{ds}\Bigl(J_{N/2+k_*+1}
   %(\eta s)\Bigr)\,ds\biggr]\\
   %&=\eta^{-k_*-1}\sum^{k_*}_{l=0}a^{k_*}_l\biggl[(N+2k_*-2-\beta-l)
   %\int^\infty_0E^{(l)}(s)s^{\beta+l+2-N/2-k_*}J_{N/2+k_*+1}(\eta s)\,ds\\
   %&\quad-\int^\infty_0E^{(l+1)}(s)s^{\beta+l+3-N/2-k_*}J_{N/2+k_*+1}(\eta s)
   %\,ds\biggr]\\
   %&=\eta^{-k_*-1}\biggl[(N+2k_*-2-\beta)a^{k_*}_0\int^\infty_0E(s)s^{\beta+3-N/2-
   %(k_*+1)}J_{N/2+k_*+1}(\eta s)\,ds\\
   %&\quad +\sum^{k_*}_{l=1}\bigl((N+2k_*-2-\beta-l)a^{k_*}_l-a^{k_*}_{l-1}\bigr)
   %\int^\infty_0E^{(l)}(s)s^{\beta+l+3-N/2-(k_*+1)}J_{N/2+k_*+1}(\eta s)\,ds\\
   %&\quad-a^{k_*}_{k_*}\int^\infty_0E^{(k_*+1)}(s)s^{\beta+3-N/2}J_{N/2+k_*+1}(\eta s)
   %\,ds\biggr]
  %end{align*}
   \begin{align*}
   &\int^\infty_0E(s)s^{\beta-N/2+3}J_{N/2}(\eta s)\,ds\\
   &=\eta^{-k_*-1}\sum^{k_*}_{l=0}a^{k_*}_l\biggl[\biggl(\dfrac{N}{2}+k_*+1\biggr)
   \int^\infty_0E^{(l)}(s)s^{\beta+l+2-N/2-k_*}J_{N/2+k_*+1}(\eta s)\,ds\\
   &\quad+\int^\infty_0E^{(l)}(s)s^{\beta+l+3-N/2-k_*}\dfrac{d}{ds}\Bigl(J_{N/2+k_*+1}
   (\eta s)\Bigr)\,ds\biggr]\\
   &=\eta^{-k_*-1}\sum^{k_*}_{l=0}a^{k_*}_l\biggl[(N+2k_*-2-\beta-l)\int^\infty_0
   E^{(l)}(s)s^{\beta+l+2-N/2-k_*}J_{N/2+k_*+1}(\eta s)\,ds\\
   &\quad-\int^\infty_0E^{(l+1)}(s)s^{\beta+l+3-N/2-k_*}J_{N/2+k_*+1}(\eta s)\,ds\biggr]\\
   &=\eta^{-k_*-1}\biggl[ (N+2k_*-2-\beta)a^{k_*}_0\int^\infty_0E(s)s^{\beta+3-N/2-(k_*+1)}J_{N/2+k_*+1}(\eta s)\,ds\\
   &\quad +\sum^{k_*}_{l=1}\bigl((N+2k_*-2-\beta-l)a^{k_*}_l-a^{k_*}_{l-1}\bigr)\int^\infty_0E^{(l)}(s)s^{\beta+l+3-N/2-(k_*+1)}J_{N/2+k_*+1}(\eta s)\,ds\\
   &\quad-a^{k_*}_{k_*}\int^\infty_0E^{(k_*+1)}(s)s^{\beta+3-N/2}J_{N/2+k_*+1}(\eta s)\,ds\biggr]
  \end{align*}
 for $\eta>0$.
 Thus \eqref{eq:3.7} holds for $k=k_*+1$.
 Therefore, \eqref{eq:3.7} holds for $k\in\N\cup\{0\}$.
 
 We now turn to prove \eqref{eq:3.6}.
 We first consider the case $\beta\in(0, (N+1)/2)$.
 It follows from \eqref{eq:2.11} and \eqref{eq:2.12} that
 $$
  \sup_{0<\eta<\infty}\eta^{-\gamma} |J_{N/2}(\eta)|<\infty, \quad \gamma\in\biggl[-\dfrac{1}{2}, \dfrac{N}{2}\biggr],
 $$
 and we have
 \begin{equation}
 \label{eq:3.8}
  \eta^{\beta-N/2}\biggl|\int^\infty_0E(s)s^{\beta-N/2+3}J_{N/2}(\eta s)\,ds\biggr|\le C\eta^{\beta-N/2+\gamma}\int^\infty_0E(s)s^{\beta-N/2+\gamma+3}\,ds
 \end{equation}
 for $\eta>0$ and $\gamma\in[-1/2, N/2]$.
 Since $\beta\in(0, (N+1)/2)$, we see that
 $$
  I:=\biggl[-\dfrac{1}{2}, \dfrac{N}{2}\biggr]\cap\biggl(\dfrac{N}{2}-\beta-4, \dfrac{N}{2}-\beta\biggr)\neq\emptyset.
 $$
 Fix $\tilde{\gamma}\in I$.
 Taking $\gamma=\tilde{\gamma}$ in \eqref{eq:3.8}, we observe that the right hand side of \eqref{eq:3.8} goes to $0$ as $\eta\to\infty$.
 Therefore, \eqref{eq:3.6} holds for $\beta\in(0, (N+1)/2)$.
 Next we consider the case $\beta\ge (N+1)/2$.
 Fix $\tilde{k}\in\N$ such that
 \begin{equation}
 \label{eq:3.9}
  \beta-\dfrac{N+1}{2}<\tilde{k}<\beta-\dfrac{N+1}{2}+4.
 \end{equation}
 Taking $k=\tilde{k}$ in \eqref{eq:3.7}, we observe from \eqref{eq:2.12} that
  \begin{equation}
  \label{eq:3.10}
  \begin{split}
   &\eta^{\beta-N/2}\biggl|\int^\infty_0E(s)s^{\beta-N/2+3}J_{N/2}(\eta s)\,ds\biggr|\\
   &\quad\le\eta^{\beta-N/2-\tilde{k}}\sum^{\tilde{k}}_{l=0}|a^{\tilde{k}}_l|\int^\infty_0|E^{(l)}(s)|s^{\beta+l+3-N/2-\tilde{k}}|J_{N/2+\tilde{k}}(\eta s)|\,ds\\
   &\quad\le C\eta^{\beta-(N+1)/2-\tilde{k}}\sum^{\tilde{k}}_{l=0}|a^{\tilde{k}}_l|\int^\infty_0 |E^{(l)}(s)|s^{\beta+l+5/2-N/2-\tilde{k}}\,ds
  \end{split}
 \end{equation}
 for $\eta>0$.
 By \eqref{eq:3.9} the right hand side of \eqref{eq:3.10} goes to $0$ as $\eta\to\infty$.
 Thus \eqref{eq:3.6} follows also for $\beta\ge (N+1)/2$.
\end{proof}

\begin{Lemma}
\label{Lemma:3.2}
 For $N\ge1$ and $\beta\in(0, N)$ there exist constants $A_{N, \beta}$, $\tilde{A}_{N, \beta}>0$ such that
 \begin{alignat}{1}
 \label{eq:3.11}
  \lim_{\eta\to\infty}F_{N, \beta}(\eta)&=A_{N, \beta}, \\
 \label{eq:3.12}
  \lim_{\eta\searrow0}\eta^{-\beta}F_{N, \beta}(\eta)&=\tilde{A}_{N, \beta}.
 \end{alignat}
\end{Lemma}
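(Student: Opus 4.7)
The plan is to establish each of the two limits by dominated convergence, after choosing a convenient representation of $F_{N,\beta}$ for each regime.

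For the limit at $\eta\searrow 0$ in \eqref{eq:3.12}, the original representation \eqref{eq:3.2} is convenient: factor out $\eta^{\beta}$ to write $\eta^{-\beta}F_{N,\beta}(\eta) = \eta^{-(N-2)/2}\int_0^\infty E(s)\,s^{\beta-N/2}J_{(N-2)/2}(\eta s)\,ds$. The key observation is that $\eta^{-(N-2)/2}J_{(N-2)/2}(\eta s) = s^{(N-2)/2}\,(\eta s)^{-(N-2)/2}J_{(N-2)/2}(\eta s)$ converges pointwise to $s^{(N-2)/2}\cdot 2^{-(N-2)/2}\Gamma(N/2)^{-1}$ by \eqref{eq:2.10}, while by \eqref{eq:2.11} (applicable since $(N-2)/2\ge -1/2$ for $N\ge 1$) it is bounded by $Cs^{(N-2)/2}$ uniformly in $\eta$. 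Multiplying by $E(s)s^{\beta-N/2}$ yields the integrable envelope $C E(s)s^{\beta-1}$ (integrable at $0$ because $\beta>0$, at $\infty$ by the super-exponential decay of $E$). Dominated convergence, together with the substitution $u=s^{4}$ for the resulting integral, gives the explicit positive value $\tilde A_{N,\beta} = \Gamma(\beta/4)/(2^{N/2+1}\Gamma(N/2))$.

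For the limit at $\eta\to\infty$ in \eqref{eq:3.11}, the alternative representation \eqref{eq:3.4} is more natural, and formally letting $E(\eta^{-1}s)\to 1$ identifies the limit as the Weber--Schafheitlin integral $\int_0^\infty s^{\beta-N/2}J_{(N-2)/2}(s)\,ds$. The obstruction is that this integrand is only absolutely integrable for $\beta<(N-1)/2$, so dominated convergence cannot be applied directly when $\beta$ is large. The remedy is the recurrence identity \eqref{eq:3.5}: by Lemma~\ref{Lemma:3.1} the second term on its right-hand side vanishes as $\eta\to\infty$, giving $\lim_{\eta\to\infty}F_{N,\beta}(\eta) = (N-\beta)\lim_{\eta\to\infty}F_{N+2,\beta}(\eta)$ whenever the right-hand limit exists. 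Iterating $m$ times and choosing $m$ so large that $\beta<(N+2m-1)/2$ and $N+2m\ge 3$, the integrand of $F_{N+2m,\beta}$ is, using \eqref{eq:2.11}--\eqref{eq:2.12}, dominated uniformly in $\eta$ by a function behaving like $s^{\beta-1}$ near $0$ and like $s^{\beta-(N+2m+1)/2}$ near $\infty$, hence integrable. Dominated convergence then yields $\lim_{\eta\to\infty}F_{N+2m,\beta}(\eta) = \int_0^\infty s^{\beta-(N+2m)/2}J_{(N+2m-2)/2}(s)\,ds$, and its positivity follows from Proposition~\ref{Proposition:2.1} and Remark~\ref{Remark:2.2} applied with $\mu=(N+2m-2)/2$ and the strictly decreasing weight $W(s)=s^{\beta-(N+2m+1)/2}$ (the convergence of $M_{\mu,0}$ being automatic since $\beta>0$, cf.\ Remark~\ref{Remark:2.1}). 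The prefactor $\prod_{k=0}^{m-1}(N+2k-\beta)$ is positive because $\beta<N$, whence $A_{N,\beta}>0$.

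The main obstacle is precisely the conditional convergence at infinity of the candidate limit integral when $\beta$ is not small; the dimension-raising iteration driven by \eqref{eq:3.5} and Lemma~\ref{Lemma:3.1} is what reaches an absolutely convergent regime in which dominated convergence applies and in which Proposition~\ref{Proposition:2.1} delivers the required sign.
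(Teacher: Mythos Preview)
Your proposal is correct and follows essentially the same approach as the paper: for \eqref{eq:3.12} you use representation \eqref{eq:3.2} with dominated convergence via \eqref{eq:2.10}--\eqref{eq:2.11}, and for \eqref{eq:3.11} you iterate the recurrence \eqref{eq:3.5} (using Lemma~\ref{Lemma:3.1} to kill the remainder) until the Bessel index is high enough for dominated convergence and Proposition~\ref{Proposition:2.1} to apply. The paper organises the induction over the admissible range of $\beta$ rather than by fixing $\beta$ and raising the dimension by $2m$, but the two formulations are equivalent.
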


\begin{proof}
 We first show \eqref{eq:3.11}.
 We claim that
 \begin{equation}
 \label{eq:3.13}
  \lim_{\eta\to\infty}F_{N, \beta}(\eta) \ \text{exists and is positive for}\ N\ge3\ \text{and}\ \beta\in\biggl(0, \dfrac{N-1}{2}\biggr).
 \end{equation}
 Since by \eqref{eq:2.11} and \eqref{eq:2.12}
 \begin{equation*}
  |E(\eta^{-1}s)s^{\beta-N/2} J_{(N-2)/2}(s)|\le
  \left\{
  \begin{aligned}
   & Cs^{\beta-1}
   & &\quad \text{if}\quad 0<s\le 1,\\
   & Cs^{\beta-(N+1)/2}
   & &\quad \text{if}\quad s>1,
  \end{aligned}
  \right.
 \end{equation*}
 we can apply the Lebesgue dominated convergence theorem for the right hand side of \eqref{eq:3.4} and obtain
 \begin{equation*}
  \lim_{\eta\to\infty}F_{N, \beta}(\eta)=A_{N, \beta}:=\int^\infty_0s^{\beta-N/2}J_{(N-2)/2}(s)\,ds.
 \end{equation*}
 Recalling that $N\ge3$ and $\beta\in(0, (N-1)/2)$, by Proposition~\ref{Proposition:2.1} and Remark~\ref{Remark:2.2} we see that $A_{N, \beta}$ is positive.
 
 We prove the general case  inductively.
 We claim that for $k\in\N$
 \begin{equation}
 \label{eq:3.14}
  \lim_{\eta\to\infty}F_{N, \beta}(\eta)\ \text{exists and is positive for}\ N\ge1\ \text{and}\ \beta\in\biggl(0, \min\biggl\{N, \dfrac{N-1}{2}+k\biggr\}\biggr).
 \end{equation}
 By \eqref{eq:3.5}, \eqref{eq:3.13} and Lemma~\ref{Lemma:3.1} we see that \eqref{eq:3.14} holds for $k=1$.
 If \eqref{eq:3.14} holds for some $k_*\in\N$, then \eqref{eq:3.14} with $k=k_*+1$ follows from \eqref{eq:3.5}, \eqref{eq:3.14} with $k=k_*$ and Lemma~\ref{Lemma:3.1}.
 Hence \eqref{eq:3.11} holds for $N\ge1$ and $\beta\in(0, N)$.
 
 We prove \eqref{eq:3.12}.
 By \eqref{eq:2.11} we have
 \begin{equation*}
  \Bigl|\eta^{-(N-2)/2}E(s)s^{\beta-N/2}J_{(N-2)/2}(\eta s)\Bigr|\le CE(s)s^{\beta-1}
 \end{equation*}
 for $\eta>0$ and $s>0$.
 Then by \eqref{eq:2.10} the Lebesgue dominated convergence theorem is applicable for the product of $\eta^{-\beta}$ and the right hand side of \eqref{eq:3.2}, and we obtain
 \begin{equation*}
  \lim_{\eta\searrow0}\eta^{-\beta}F_{N, \beta}(\eta)=\tilde{A}_{N, \beta}:=
  \dfrac{1}{\Gamma(N/2)2^{(N-2)/2}}\int^\infty_0E(s)s^{\beta-1}\,ds>0.
 \end{equation*}
 Thus the proof of Lemma~\ref{Lemma:3.2} is complete.
\end{proof}

We now turn to the proof of Theorem~\ref{Theorem:1.2}.

\begin{proof}[{\bf Proof of Theorem~\ref{Theorem:1.2}}]
 We first prove assertion~(i).
 By Proposition~\ref{Proposition:2.1} and Remark~\ref{Remark:2.2} 
 (which require that $1/2\le \mu =(N-2)/2$)
 we have
 \begin{equation}
 \label{eq:3.15}
  F_{N, \beta}(\eta)>0\quad \text{for}\quad  \eta>0\quad \text{if} \quad N\ge3 \quad \text{and}\quad \beta\in(0, \beta_0],
 \end{equation}
 where $\beta_0:=(N+1)/2$ for $N\ge3$.
 
 In the case $N=2$, we deduce from \eqref{eq:2.9} that
 \begin{equation*}
  \begin{split}
   \dfrac{d}{d\eta}\Bigl[\eta^{-\beta}F_{2, \beta}(\eta)\Bigr]
   & =\dfrac{d}{d\eta}\biggl[\int^\infty_0E(s)s^{\beta-1}J_{0}(\eta s)\,ds\biggr]\\
   & =-\int^\infty_0E(s)s^{\beta}J_{1}(\eta s)\,ds=-\eta^{-\beta-1}F_{4, \beta+2}(\eta)
  \end{split}
 \end{equation*}
 for $\eta>0$ and $\beta>0$.
 Since we have already proved in \eqref{eq:3.15} that $F_{4, \beta+2}(\eta)$ is positive for $\eta>0$ if $0<\beta\le 1/2$, the map $\eta\mapsto\eta^{-\beta}F_{2, \beta}(\eta)$ is decreasing on $(0,\infty)$.
 Hence it follows from Lemma~\ref{Lemma:3.2} that $F_{2, \beta}$ is positive if $\beta\in(0, \beta_0]$, where $\beta_0:=1/2$ if $N=2$.
 
 We  turn to the case $N=1$.
 By \eqref{eq:2.7} and \eqref{eq:3.4} we have
 \begin{equation*}
  \begin{split}
   F_{1, \beta}(\eta)
   & =\sqrt{\dfrac{2}{\pi}}\int^\infty_0E(\eta^{-1}s)s^{\beta-1}\cos s\,ds\\
   & =\sqrt{\dfrac{2}{\pi}}\int^\infty_0\biggl(1-\beta+\dfrac{4s^4}{\eta^4}\biggr)E(\eta^{-1}s)s^{\beta-2}\sin s\,ds
  \end{split}
 \end{equation*}
 for $\eta>0$.
 By a direct calculation, we see that the map $s\mapsto (1-\beta+4s^4)E(s)s^{\beta-2}$ is non-increasing if $0<\beta\le 7/16$.
 Thus $F_{1, \beta}(\eta)$ is positive for $\eta>0$ if $\beta\in(0, \beta_0]$, where $\beta_0:=7/16$ if $N=1$.
 
 We prove that we can extend the positivity result to $\beta>\beta_0$.
 Assume that there exist $\{\gamma_m\}^\infty_{m=1}\subset(\beta_0, \infty)$, $\{\eta_m\}^\infty_{m=1}\subset(0, \infty)$ such that
 \begin{equation*}
  \gamma_m\to\beta_0\quad \text{as} \quad m\to\infty, \quad F_{N, \gamma_m}(\eta_m)\le 0 \quad \text{for}\quad m\in\N.
 \end{equation*}
 If $\{\eta_m\}^\infty_{m=1}$ is bounded then $\eta_m$ converges, after passing to a subsequence, to some $\eta_0\in[0, \infty)$.
 Otherwise, a subsequence of $\{\eta_m\}^\infty_{m=1}$ goes to infinity.
 In what follows it is important that a careful inspection of the proofs 
 of Lemmas~\ref{Lemma:3.1} and \ref{Lemma:3.2} shows that the arguments 
 are uniform with respect to $\beta$ in a neighbourhood of $\beta_0$.
 By an argument similar to that of the proof of Lemma~\ref{Lemma:3.2}, if $\eta_0\neq0$, 
 \begin{equation*}
  \lim_{m\to\infty}F_{N, \gamma_m}(\eta_m)=
  \left\{
  \begin{aligned}
   & F_{N, \beta_0}(\eta_0)
   & & \quad \text{if}\quad \{\eta_m\}^\infty_{m=1} \quad \text{is bounded},\\
   & A_{N, \beta_0}
   & & \quad \text{otherwise.}
  \end{aligned}
  \right.
 \end{equation*}
 This contradicts  the positivity of $F_{N, \beta_0}$ or $A_{N, \beta_0}$, respectively.
 In the case $\eta_0=0$, it follows with the same arguments as in 
 Lemma~\ref{Lemma:3.2} that
 \begin{equation*}
  0\ge \lim_{m\to\infty}\eta_m^{-\gamma_m}F_{N, \gamma_m}(\eta_m)=\tilde{A}_{N, \beta_0}>0,
 \end{equation*}
 again a contradiction.
 Therefore, we can find $\beta_1>\beta_0$ which satisfies \eqref{eq:1.7}.
 One may observe that this argument even proves that the set $\{ \beta\in(0,N):
 \mbox{\ (\ref{eq:1.7}) is satisfied } \}$ is open in $(0,N)$.
 
 Finally, we show the existence of $\beta_2$ which satisfies \eqref{eq:1.8}.
 Since
  \begin{equation*}
  F_{N, N}(\eta)=\int^\infty_0E(\eta^{-1}s)s^{N/2}J_{(N-2)/2}(s)\,ds=\eta^Nf_N(\eta),
 \end{equation*}
 where $f_N$ is as in \eqref{eq:2.1}, $F_{N, N}$ has a nontrivial negative part.
 Since $F_{N, \beta}(\eta)$ is continuous with respect to $\beta$, $F_{N, \beta}$ has also a nontrivial negative part if $\beta<N$ is sufficiently closed to $N$.
 Therefore, we obtain $\beta_2\ge \beta_1$ which satisfies \eqref{eq:1.8}.
 
  \medskip
 We prove the assertion~(ii).
 It follows from the assumption in (ii) that $F_{N, \beta}$ is positive on $(0, \infty)$.
 By \eqref{eq:3.11} in Lemma~\ref{Lemma:3.2} we find $\eta_*>0$ such that 
 $$
 F_{N,\beta}(\eta) \ge \dfrac{1}{2} A_{N,\beta} \quad \text{for} \quad \eta \ge \eta_*. 
 $$
 Since $F_{N,\beta}$ is continuous in $(0,\infty)$, we find $K_1>0$ such that 
 \begin{equation}
 \label{eq:3.16}
 F_{N,\beta}(\eta) \ge K_1 \quad \text{for} \quad \eta \ge 1. 
 \end{equation}
 Setting $\eta = |x| t^{-1/4}$, we deduce from \eqref{eq:3.3} and \eqref{eq:3.16} that 
 \begin{equation}
 \label{eq:3.17}
  |x|^\beta [S(t)\varphi](x) \ge c_{N, \beta}K_1 
  \quad \text{for} \quad (x, t) \in \R^N \times (0, \infty) \quad \text{with} \quad |x|\ge t^{1/4}.
 \end{equation}
 On the other hand, by \eqref{eq:3.12} in Lemma \ref{Lemma:3.2} we find $K_2>0$ such that  
 \begin{equation}
 \label{eq:3.18}
 \eta^{-\beta} F_{N,\beta}(\eta) \ge K_2 \quad \text{for} \quad \eta \le 1.  
 \end{equation}
 Setting $\eta=|x| t^{-1/4}$ in \eqref{eq:3.3} again, we have that 
 \begin{equation}
 \label{eq:3.19}
  [S(t)\varphi](x)=c_{N, \beta}t^{-\beta/4}\eta^{-\beta}F_{N, \beta}(\eta). 
 \end{equation}
 Combining \eqref{eq:3.18} with \eqref{eq:3.19}, we obtain  
 \begin{equation}
 \label{eq:3.20}
  t^{\beta/4}[S(t)\varphi](x) \ge c_{N, \beta}K_2 
  \quad \text{for} \quad (x, t) \in \R^N \times (0, \infty) \quad \text{with} \quad |x|\le t^{1/4}.
 \end{equation}
 Let $K_*:=c_{N, \beta}\min\{K_1, K_2\}>0$.
 We define $\mathcal{R} : \R^N \times (0, \infty) \to \R$ as 
 $$
 \mathcal{R}(x,t):= 
 \begin{cases}
 \vspace{0.1cm}
 \dfrac{K_*}{|x|^{\beta}} & \quad \text{if} \quad |x| t^{-1/4} \ge 1, \\
 \dfrac{K_*}{t^{\beta/4}} & \quad \text{if} \quad |x| t^{-1/4} \le 1.  
 \end{cases}
 $$
 It is clear that 
 \begin{equation}
 \label{eq:3.21}
 \mathcal{R}(x,t) \ge \dfrac{K_*}{|x|^\beta + t^{\beta/4}} 
 \quad \text{for} \quad (x,t) \in \R^N \times (0, \infty). 
 \end{equation}
 Thus, by \eqref{eq:3.17}, \eqref{eq:3.20} and \eqref{eq:3.21}, we obtain \eqref{eq:1.9}.
 This shows (ii).
 
 \medskip
 By Lemma~{\rm \ref{Lemma:3.2}}, \eqref{eq:3.4} and \eqref{eq:3.19} we also obtain \eqref{eq:1.10} in (iii).
 Here, $K^*$ in \eqref{eq:1.10} is a constant depending only on $N$ and $\beta$.
 We remark that the upper bound holds irrespective of whether $[S(t)\varphi](x)$ is positive or not. This proves (iii).
 The proof of Theorem~\ref{Theorem:1.2} is complete.
\end{proof}

As a direct consequence of Theorem~\ref{Theorem:1.2}-(i) we have:
\begin{Corollary}
\label{Corollary:3.1}
 Let $N\ge1$, $\beta\in(0, N)$ and $1< q<N/(N-\beta)$.
 For $f\in L^q(\R^N)$ with $f\ge0$ a.e. in $\R^N$, set
 $$
  \psi(x):=\int_{\R^N}\dfrac{f(y)}{|x-y|^\beta}\,dy
 $$
 If $\beta\in(0, \beta_1)$, where $\beta_1$ is as in Theorem~{\rm \ref{Theorem:1.2}}-{\rm (i)}, then $[S(t)\psi](x)$ is positive for $(x, t)\in\R^N\times(0, \infty)$ and satisfies
 \begin{equation}
 \label{eq:3.22}
  |[S(t)\psi](x)|\le Ct^{-\frac{N}{4}\big(\frac{1}{q}+\frac{\beta}{N}-1\big)}\|f\|_{L^q(\R^N)}\quad\text{for}\quad (x, t)\in\R^N\times(0, \infty).
 \end{equation}
\end{Corollary}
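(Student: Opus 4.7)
The plan is to reduce the statement to the case of the pure power initial datum $\varphi_0(w) := |w|^{-\beta}$ already handled in Theorem~\ref{Theorem:1.2}, by rewriting $\psi$ as a convolution and exchanging the order of integration. Concretely, for $(x,t) \in \R^N \times (0,\infty)$ I would write
$$
[S(t)\psi](x) = \int_{\R^N} G(x-z, t) \int_{\R^N} \frac{f(y)}{|z-y|^\beta} \, dy \, dz = \int_{\R^N} f(y) \, [S(t)\varphi_0](x-y) \, dy,
$$
where the interchange is justified by the rapid decay of $G$ in \eqref{eq:2.2} together with $f \in L^q(\R^N)$ and the standard Hardy--Littlewood--Sobolev bound on $\psi$ under $q < N/(N-\beta)$.

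The positivity assertion is then immediate: since $\beta \in (0, \beta_1)$, Theorem~\ref{Theorem:1.2}(i) gives $[S(t)\varphi_0](w) > 0$ for every $(w,t)\in\R^N\times(0,\infty)$, and since $f \geq 0$ (and nontrivial), the representation above yields $[S(t)\psi](x) > 0$.

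For the decay estimate \eqref{eq:3.22}, I would insert the upper bound from Theorem~\ref{Theorem:1.2}(iii), namely $|[S(t)\varphi_0](x-y)| \leq K^*/(|x-y|^\beta + t^{\beta/4})$, and apply H\"older's inequality with dual exponent $q' = q/(q-1)$:
$$
|[S(t)\psi](x)| \leq K^* \|f\|_{L^q(\R^N)} \left( \int_{\R^N} \frac{dz}{(|z|^\beta + t^{\beta/4})^{q'}} \right)^{1/q'}.
$$
The change of variables $z = t^{1/4} w$ converts the inner integral into $t^{(N - \beta q')/4} \int_{\R^N} (|w|^\beta + 1)^{-q'} dw$, and the remaining integral is finite precisely under the condition $\beta q' > N$, which is equivalent to $q < N/(N-\beta)$; this is exactly where the hypothesis on $q$ enters. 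A short computation then gives
$$
\frac{N - \beta q'}{4 q'} = -\frac{N}{4}\left( \frac{1}{q} + \frac{\beta}{N} - 1 \right),
$$
which yields the claimed time exponent.

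The only genuine technical point is verifying the Fubini step and the finiteness of $\psi(x)$ almost everywhere; these are standard consequences of the Riesz potential theory under $q < N/(N-\beta)$, so this is a minor obstacle rather than a real difficulty. Everything else reduces to bookkeeping of exponents and the pointwise bounds already established.
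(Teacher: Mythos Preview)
Your proof is correct and follows the paper's strategy for the positivity part verbatim: both you and the paper use Fubini to write $[S(t)\psi](x)=\int_{\R^N}[S(t)\varphi_0](x-y)\,f(y)\,dy$ and then invoke Theorem~\ref{Theorem:1.2}(i).

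For the decay estimate~\eqref{eq:3.22} there is a minor difference worth noting. The paper appeals directly to the Hardy--Littlewood--Sobolev inequality (implicitly: $\psi\in L^r(\R^N)$ with $1/r=1/q+\beta/N-1$, followed by the $L^r\to L^\infty$ smoothing of $S(t)$). You instead feed the pointwise bound from Theorem~\ref{Theorem:1.2}(iii) into the convolution representation and apply H\"older, computing the $L^{q'}$-norm of $(|z|^\beta+t^{\beta/4})^{-1}$ by scaling. Both routes are one-line applications of standard inequalities and yield the same exponent; your version has the small advantage of staying entirely within the pointwise framework already set up in Theorem~\ref{Theorem:1.2}, while the paper's version is more self-contained in that it does not need part~(iii).
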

\begin{proof}
 By the Hardy-Littlewood-Sobolev inequality (see e.g., \cite[Theorem 4.3]{LL}) we see that \eqref{eq:3.22} holds.
 From Fubini's theorem we deduce that
 \begin{equation*}
  \begin{split}
  [S(t)\psi](x)
  &=\int_{\R^N}\int_{\R^N}G(x-y, t)|y-z|^{-\beta}f(z)\,dz\,dy\\
  &=\int_{\R^N}\biggl(\int_{\R^N}G(x-y, t)|y-z|^{-\beta}\,dy\biggr)f(z)\,dz\\
  &=\int_{\R^N}[S(t)\varphi](x-z)f(z)\,dz,
 \end{split}
 \end{equation*}
 where $\varphi(x):=|x|^{-\beta}$.
 Then this together with Theorem~\ref{Theorem:1.2}-(i) implies that
 \begin{equation*}
  [S(t)\psi](x)>0\quad \text{for}\quad (x, t)\in\R^N\times(0, \infty).
 \end{equation*}
 Thus Corollary~\ref{Corollary:3.1} follows.
\end{proof}

%%%%%%%%%%%%%%%%%%%%%%%%%%%%%%%%%%%%%%%%%%%%%%%%%%%%%
%%%%%%%%%%%%%%%%%%%%%%%%%%%%%%%%%%%%%%%%%%%%%%%%%%%%%
\section{Global-in-time positive solutions to problem~\eqref{eq:1.11}-\eqref{eq:1.12}} \label{Section:4}
%%%%%%%%%%%%%%%%%%%%%%%%%%%%%%%%%%%%%%%%%%%%%%%%%%%%%
%%%%%%%%%%%%%%%%%%%%%%%%%%%%%%%%%%%%%%%%%%%%%%%%%%%%%

In this section, we consider the semilinear equation \eqref{eq:1.11} and prove Theorem~\ref{Theorem:1.3}.
Set
\begin{equation*}
 H(x, t):=\int^t_0\int_{\R^N}\exp\biggl[-c_2\Bigl(\dfrac{|y|}{s^{1/4}}\Bigr)^{4/3}\biggr]\dfrac{s^{-N/4}}{(|x-y|^\beta+(t-s)^{\beta/4})^p}\,dy\,ds
\end{equation*}
for $(x, t)\in\R^N\times(0, \infty)$, where $c_2$ is given by \eqref{eq:2.2}.
We remark that the function $H$ appears when we estimate the second term of the right hand side of \eqref{eq:1.13} by \eqref{eq:2.2} and
\begin{equation*}
 |u(x, t)|\le \dfrac{C}{|x|^\beta+t^{\beta/4}}\quad \text{for}\quad \R^N\times(0, \infty).
\end{equation*}
We first consider the decay estimate for $H$.

\begin{Proposition}
\label{Proposition:4.1}
 Let $N$, $p$ and $\beta$ be as in Theorem~{\rm \ref{Theorem:1.3}}.
 Then 
 \begin{equation}
 \label{eq:4.1}
  \sup_{(x, t)\in\R^N\times(0, \infty)}(|x|^\beta+t^{\beta/4})H(x, t)<\infty.
 \end{equation}
\end{Proposition}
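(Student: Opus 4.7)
The plan is to exploit the parabolic scaling symmetry of $H$ encoded in the identity $p\beta=\beta+4$ (which follows from $\beta=4/(p-1)$). Substituting $y=\lambda\tilde y$ and $s=\lambda^4\tilde s$ in the definition of $H$, the Jacobian $\lambda^{N+4}$ combines with $s^{-N/4}=\lambda^{-N}\tilde s^{-N/4}$ and
$$(|\lambda x-\lambda\tilde y|^\beta+(\lambda^4t-\lambda^4\tilde s)^{\beta/4})^{-p}=\lambda^{-p\beta}(|x-\tilde y|^\beta+(t-\tilde s)^{\beta/4})^{-p}$$
to give $H(\lambda x,\lambda^4 t)=\lambda^{4-p\beta}H(x,t)=\lambda^{-\beta}H(x,t)$. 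Hence the weighted quantity $F(x,t):=(|x|^\beta+t^{\beta/4})H(x,t)$ is invariant under $(x,t)\mapsto(\lambda x,\lambda^4 t)$. Choosing $\lambda=t^{-1/4}$ reduces \eqref{eq:4.1} to the statement $\sup_{x\in\R^N}(|x|^\beta+1)H(x,1)<\infty$, which I would then handle separately on the ranges $|x|\le 1$ and $|x|\ge 1$.

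For $|x|\le 1$ it suffices to bound $H(x,1)$ by a constant, and I would split the $s$-integral at $s=1/2$. On $[0,1/2]$ the lower bound $(1-s)^{\beta/4}\ge(1/2)^{\beta/4}$ forces $(|x-y|^\beta+(1-s)^{\beta/4})^{-p}\le C$, while the substitution $y=s^{1/4}z$ gives $\int s^{-N/4}\exp[-c_2(|y|/s^{1/4})^{4/3}]\,dy=\int\exp[-c_2|z|^{4/3}]\,dz=C_0$, so this part contributes a bounded amount. On $[1/2,1]$ the factor $s^{-N/4}$ is bounded and the Gaussian kernel itself decays at infinity in $y$; splitting the $y$-integral at $|x-y|=(1-s)^{1/4}$ and using $(|x-y|^\beta+(1-s)^{\beta/4})^{-p}\le(1-s)^{-p\beta/4}$ on the inner ball together with $\le|x-y|^{-p\beta}$ on the outer region yields an inner integral of order $(1-s)^{(N-p\beta)/4}$ (plus a constant term). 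This is integrable on $[1/2,1]$ precisely because $(N-p\beta)/4>-1$, which is equivalent to $\beta<N$, as guaranteed by $p>1+4/N$.

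For $|x|\ge 1$ I aim to show $H(x,1)\le C|x|^{-\beta}$ and accordingly split $\R^N=A\cup B$ with $A=\{y:|y-x|\ge|x|/2\}$ and $B=\{y:|y-x|<|x|/2\}$. On $A$ the elementary estimate $(|x-y|^\beta+(1-s)^{\beta/4})^{-p}\le(|x|/2)^{-p\beta}$ combined with $\int_0^1\!\!\int_{\R^N}s^{-N/4}\exp[-c_2(|y|/s^{1/4})^{4/3}]\,dy\,ds=C$ gives a contribution bounded by $C|x|^{-p\beta}$, so multiplication by $|x|^\beta$ leaves $C|x|^{-(p-1)\beta}=C|x|^{-4}$, again by virtue of $(p-1)\beta=4$. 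On $B$ one has $|y|\ge|x|/2$, whence $\exp[-c_2(|y|/s^{1/4})^{4/3}]\le\exp[-c_2(|x|/(2s^{1/4}))^{4/3}]$; splitting the $s$-integral at $s=1/2$, on $[1/2,1]$ this Gaussian factor is uniformly $\le\exp[-c_2(|x|/2)^{4/3}]$, while on $[0,1/2]$ the substitution $\eta=|x|/s^{1/4}$ converts $\int_0^{1/2}s^{-N/4}\exp[-c_2(|x|/(2s^{1/4}))^{4/3}]\,ds$ into a constant multiple of $|x|^{-N+4}\int_{2^{1/4}|x|}^\infty\eta^{N-5}\exp[-c_2(\eta/2)^{4/3}]\,d\eta$. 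Standard Laplace-type asymptotics make both parts super-polynomially small in $|x|$, so after multiplication by $|x|^\beta$ the $B$-contribution remains bounded.

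I expect the main obstacle to be the bookkeeping in the estimate on $B$, where the singular factor $(|x-y|^\beta+(1-s)^{\beta/4})^{-p}$ near $s=1$ must be paired carefully with the Gaussian decay so as not to waste any power of $|x|$, and where the spatial integral of $(|w|^\beta+(1-s)^{\beta/4})^{-p}$ over $\{|w|<|x|/2\}$ requires separate attention depending on whether $p\beta<N$, $p\beta=N$, or $p\beta>N$. The identity $p\beta=\beta+4$ is the structural fact that makes every such balance close, and the condition $\beta<N$ is what guarantees integrability of the singular $(1-s)$-factor at $s=1$.
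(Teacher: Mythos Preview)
Your plan is correct and your use of the scaling identity $H(\lambda x,\lambda^4 t)=\lambda^{-\beta}H(x,t)$ (equivalent to $p\beta=\beta+4$) to reduce to the slice $t=1$ is a clean way to organise the argument. All the estimates you sketch go through: for $|x|\le1$ the near/far split at $|x-y|=(1-s)^{1/4}$ produces a term of order $(1-s)^{(N-p\beta)/4}$ whose $s$-integrability is exactly $\beta<N$; for $|x|\ge1$ the decomposition $A\cup B$ gives an $A$-part controlled by $|x|^{\beta-p\beta}=|x|^{-4}$ and a $B$-part where the stretched-exponential factor $\exp[-c(|x|/s^{1/4})^{4/3}]$ absorbs every polynomial in $|x|$, including the volume $|B|\sim|x|^N$ that arises when you bound the $y$-integral (a factor you should make explicit). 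The case distinction $p\beta\lessgtr N$ only affects harmless polynomial or logarithmic prefactors that the exponential kills.

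This differs from the paper's route. The paper does \emph{not} normalise $t=1$ but instead proves $\sup t^{\beta/4}H<\infty$ and $\sup|x|^\beta H<\infty$ separately. The first bound is obtained much as you would do it (change $z=s^{-1/4}y$, $\sigma=t/s$, then a three-region split in $(\sigma,z)$); for the second, however, the paper performs a longer sequence of substitutions that reduces $|x|^\beta H(x,t)$ to a function $\tilde H(R)$ of the single similarity variable $R=t^{1/4}/|x|$ and then invokes two technical lemmas from \cite{FGG} to bound the resulting one-dimensional oscillatory integrals. Your near/far decomposition in $y$ replaces that machinery by a direct and self-contained estimate; the paper's approach, on the other hand, recycles sharp estimates already available from \cite{FGG} and would adapt more readily if one needed precise asymptotics of $\tilde H(R)$ rather than merely boundedness.
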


\begin{proof}
 The proof is based on the argument in \cite[Proposition 6.2]{FGG}.
 We first claim that
 \begin{equation}
 \label{eq:4.2}
  \sup_{(x, t)\in\R^N\times(0, \infty)}t^{\beta/4}H(x, t)<\infty.
 \end{equation}
 By the change of variables $z=s^{-1/4}y$ and $\sigma=s^{-1}t$ we have
 \begin{equation*}
  \begin{split}
   t^{\beta/4}H(x, t)
   & = t^{\beta/4}\int^t_0\int_{\R^N}e^{-c_2|z|^{4/3}}\dfrac{s^{-\beta p/4}}{(|s^{-1/4}x-z|^\beta+(s^{-1}t-1)^{\beta/4})^p}\,dz\,ds\\
   & = t^{\beta/4+1-\beta p/4}\int^\infty_1\int_{\R^N}e^{-c_2|z|^{4/3}}\dfrac{\sigma^{\beta p/4-2}}{(|z-\sigma^{1/4}t^{-1/4}x|^\beta+(\sigma-1)^{\beta/4})^p}\,dz\,d\sigma
  \end{split}
 \end{equation*}
 for $(x, t)\in\R^N\times(0, \infty)$.
 Since $\beta=4/(p-1)$, we have $\beta/4+1-\beta p/4=0$ and
 \begin{equation}
 \label{eq:4.3}
  t^{\beta/4}H(x, t)=\int^\infty_1\int_{\R^N}H_{N, \beta}(z, \sigma; t^{-1/4}x)\,dz\,d\sigma.
 \end{equation}
 Here, we set
 \begin{equation*}
  H_{N, \beta}(z, \sigma; w):=e^{-c_2|z|^{4/3}}\dfrac{\sigma^{\beta p/4-2}}{(|z-\sigma^{1/4}w|^\beta+(\sigma-1)^{\beta/4})^p}.
 \end{equation*}
 We estimate the right hand side of \eqref{eq:4.3} by splitting the integral into three parts
 \begin{alignat*}{1}
  &A_1(w):=\int^\infty_2\int_{\R^N}H_{N, \beta}(z, \sigma; w)\,dz\,d\sigma,\\
  &A_2(w):=\int^2_1\int_{|z-\sigma^{1/4}w|\ge1/2}H_{N, \beta}(z, \sigma; w)\,dz\,d\sigma,\\
  &A_3(w):=\int^2_1\int_{|z-\sigma^{1/4}w|\le1/2}H_{N, \beta}(z, \sigma; w)\,dz\,d\sigma.
 \end{alignat*}
 Regarding $A_1$ and $A_2$, we have
 \begin{alignat}{1}
 \label{eq:4.4}
  A_1(w)
  &\le \int_{\R^N}e^{-c_2|z|^{4/3}}\,dz\int^\infty_2\sigma^{-2}\biggl(\dfrac{\sigma}{\sigma-1}\biggr)^{\beta p/4}\,d\sigma<\infty,\\
 \label{eq:4.5}
  A_2(w)
  &\le \int_{\R^N}e^{-c_2|z|^{4/3}}\,dz\int^2_1\dfrac{\sigma^{\beta p/4-2}}{(2^{-\beta}+(\sigma-1)^{\beta/4})^p}\,d\sigma<\infty.
 \end{alignat}
 We consider $A_3$.
 Recalling that $1-\beta p/4=-\beta/4<0$ and $0<\beta<N$, we see that
 \begin{equation}
 \label{eq:4.6}
  \begin{split}
   A_3(w)
    &\le C\int^2_1\int_{|\xi|\le1/2}\dfrac{1}{(|\xi|^\beta+(\sigma-1)^{\beta/4})^p}\,d\xi\,d\sigma\\
    &\le C\int_{|\xi|\le 1/2}\int^2_1\dfrac{1}{(\sigma-1+|\xi|^4)^{\beta p/4}}\,d\sigma\,d\xi\le C\int_{|\xi|\le 1/2}|\xi|^{-\beta}\,d\xi<\infty.
  \end{split}
 \end{equation}
 Combining \eqref{eq:4.4}, \eqref{eq:4.5} and \eqref{eq:4.6} with \eqref{eq:4.3}, we obtain \eqref{eq:4.2}.
 
 We prove  now that 
 \begin{equation}
 \label{eq:4.7}
  \sup_{(x, t)\in\R^N\times(0, \infty)}|x|^\beta H(x, t)<\infty.
 \end{equation}
The proof of \eqref{eq:4.7} is based on \cite[Proposition 6.2]{FGG} and \cite[Lemma 10]{GG07}. 
Using the change of variables $z=s^{-1/4}y$ and $\sigma=(|z|/|x|) s^{1/4}$, 
we deduce from Fubini's theorem that
\begin{equation*}
 \begin{split}
  |x|^{\beta}H(x, t)
  &=|x|^\beta\int^t_0\int_{\R^N}\exp[-c_2|z|^{4/3}]\dfrac{1}{(|x-s^{1/4}z|^\beta+(t-s)^{\beta/4})^p}\,dz\,ds\\
  &=|x|^\beta\int_{\R^N}\exp[-c_2|z|^{4/3}]\int^t_0\dfrac{1}{(|x-s^{1/4}z|^\beta+(t-s)^{\beta/4})^p}\,ds\,dz\\
  &=4\int_{\R^N}\dfrac{\exp[-c_2|z|^{4/3}]}{|z|^4}\int^{\frac{|z|}{|x|}t^{1/4}}_0\dfrac{\sigma^3}{\Big(\Big|\frac{x}{|x|}-\sigma \frac{z}{|z|}\Big|^\beta+\Big(\frac{t}{|x|^4}-\frac{\sigma^4}{|z|^4}\Big)^{\beta/4}\Big)^p}\,d\sigma\,dz.
 \end{split}
\end{equation*}
Without loss of generality, we may take $x/|x|={\bf e}_1:=(1, 0, \dots, 0)$.
Since
\begin{equation*}
 \bigg|{\bf e}_1-\sigma\dfrac{z}{|z|}\bigg|^\beta=\bigg(1-2\dfrac{z_1}{|z|}\sigma+\sigma^2\bigg)^{\beta/2}=\bigg((\sigma-1)^2+2\sigma\bigg(1-\dfrac{z_1}{|z|}\bigg)\bigg)^{\beta/2},
\end{equation*}
we have 
\begin{equation}
\label{eq:4.8}
|x|^{\beta}H(x, t) \le C \tilde{H}(R), 
\end{equation}
where $R:=t^{1/4}/|x|$ and 
\begin{equation*}
  \tilde{H}(R):=
   \int_{\R^N}\dfrac{\exp[-c_2|z|^{4/3}]}{|z|^4}\int^{R|z|}_0\dfrac{\sigma^3}{\Big(R^4-\frac{\sigma^4}{|z|^4}\Big)^{\beta p/4}+(\sigma-1)^{\beta p}+\Big(2\sigma\Big(1-\frac{z_1}{|z|}\Big)\Big)^{\beta p/2}}\,d\sigma\,dz. 
\end{equation*}
We first consider the case $N\ge2$.
Putting $z=(z_1, z')$ and changing the variables $r=|z'|$, we reduce $\tilde{H}(R)$ into 
\begin{equation*}
 \begin{split}
&\tilde{H}(R) = (N-1)\omega_{N-1}\int^\infty_0r^{N-2}\int^\infty_{-\infty}\dfrac{\exp[-c_2(|z_1|^2+r^2)^{2/3}]}{(|z_1|^2+r^2)^2}\\
& \quad \times\int^{R\sqrt{|z_1|^2+r^2}}_0\dfrac{\sigma^3}{\Big(R^4-\frac{\sigma^4}{(|z_1|^2+r^2)^2}\Big)^{\beta p/4}+(\sigma-1)^{\beta p}+\Big(2\sigma\Big(1-\frac{z_1}{\sqrt{|z_1|^2+r^2}}\Big)\Big)^{\beta p/2}}\,d\sigma\,dz_1\,dr.
 \end{split}
\end{equation*}
Here $\omega_{N-1}$ denotes the $(N-1)$-dimensional volume of $B_1(0)\subset \mathbb{R}^{N-1}$. 
By changing the variable $z_1=rw$, we observe that 
\begin{equation*}
 \begin{split}
&\tilde{H}(R) =(N-1)\omega_{N-1}\int^\infty_0r^{N-5}\int^\infty_{-\infty}\dfrac{\exp[-c_2r^{4/3}(1+w^2)^{2/3}]}{(1+w^2)^2}\\
  &\quad\times\int^{Rr\sqrt{1+w^2}}_0\dfrac{\sigma^3}{\Big(R^4-\frac{\sigma^4}{r^4(1+w^2)^2}\Big)^{\beta p/4}+(\sigma-1)^{\beta p}+\Big(2\sigma\Big(1-\frac{w}{\sqrt{1+w^2}}\Big)\Big)^{\beta p/2}}\,d\sigma\,dw\,dr.
 \end{split}
\end{equation*}
Since 
\begin{equation*}
 \begin{split}
  1-\dfrac{w}{\sqrt{1+w^2}}
   &=\dfrac{\sqrt{1+w^2}-w}{\sqrt{1+w^2}} 
   =\dfrac{1}{(\sqrt{1+w^2}+w)\sqrt{1+w^2}} 
   \ge\dfrac{1}{2(1+w^2)},\\
  R^4-\frac{\sigma^4}{r^4(1+w^2)^2}
  &=\dfrac{(Rr\sqrt{1+w^2}-\sigma)(Rr\sqrt{1+w^2}+\sigma)(R^2r^2(1+w^2)+\sigma^2)}{r^4(1+w^2)^2}\\
  &\ge\dfrac{R^3(Rr\sqrt{1+w^2}-\sigma)}{r\sqrt{1+w^2}},
 \end{split}
\end{equation*}
for $w\in\R$ and $\sigma\in[0, Rr\sqrt{1+w^2}]$, we have 
\begin{equation*}
 \begin{split}
\tilde{H}(R) & \le 2(N-1)\omega_{N-1}\int^\infty_0\int^\infty_{0}r^{N-5} \dfrac{\exp[-c_2r^{4/3}(1+w^2)^{2/3}]}{(1+w^2)^2}\\
  &\quad\times\int^{Rr\sqrt{1+w^2}}_0\dfrac{\sigma^3}{\Big(\frac{R^3(Rr\sqrt{1+w^2}-\sigma)}{r\sqrt{1+w^2}}\Big)^{\beta p/4}+(\sigma-1)^{\beta p}+\Big(\frac{\sigma}{1+w^2}\Big)^{\beta p/2}}\,d\sigma\,dr\,dw.
 \end{split}
\end{equation*}
Changing the variable $\rho=\rho(r)=r\sqrt{1+w^2}$ , we deduce from \eqref{eq:4.8} that for $N\ge 2$:
\begin{equation}
\label{eq:4.9}
 \begin{split}
  |x|^{\beta}H(x, t)
  &\le C\int^\infty_{0}\dfrac{1}{(1+w^2)^{N/2}}\int^\infty_0\rho^{N-5}e^{-c_2\rho^{4/3}}\\
  &\quad \times\int^{R\rho}_0\dfrac{\sigma^3}{\Big(\frac{R^3(R\rho-\sigma)}{\rho}\Big)^{\beta p/4}+(\sigma-1)^{\beta p}+\Big(\frac{\sigma}{1+w^2}\Big)^{\beta p/2}}\,d\sigma\,d\rho\,dw\\
  &\le C\int^\infty_{0}\dfrac{1}{(1+w^2)^{N/2}}\int^\infty_0\rho^{N-5}e^{-c_2\rho^{4/3}}\\
  &\quad \times\int^{R\rho}_0\dfrac{\sigma^3}{\Big(\frac{R^3(R\rho-\sigma)}{\rho}+(\sigma-1)^{4}+\frac{\sigma^2}{(1+w^2)^2}\Big)^{\beta p/4}}\,d\sigma\,d\rho\,dw.
 \end{split}
\end{equation}
Next we consider the case where $N=1$.
Since
\begin{equation*}
 R^4-\dfrac{\sigma^4}{|z|^4}=\dfrac{(R|z|+\sigma)(R|z|-\sigma)(R^2|z|^2+\sigma^2)}{|z|^4}\ge\dfrac{R^3(R|z|-\sigma)}{|z|},
\end{equation*}
for $\sigma\in[0, R\,|z|\, ]$, we have
\begin{equation*}
 \begin{split}
\tilde{H}(R)& \le 2\int^{\infty}_{0}z^{-4}e^{-c_2z^{4/3}}\int^{Rz}_0\dfrac{\sigma^3}{\Big(\frac{R^3(Rz-\sigma)}{z}\Big)^{\beta p/4}+(\sigma-1)^{\beta p}}\,d\sigma\,dz\\
    & \le C\int^{\infty}_{0}z^{-4}e^{-c_2z^{4/3}}\int^{Rz}_0\dfrac{\sigma^3}{\Big(\frac{R^3(Rz-\sigma)}{z}+(\sigma-1)^{4}\Big)^{\beta p/4}}\,d\sigma\,dz.
 \end{split}
\end{equation*}
This together with \eqref{eq:4.8} implies that in the case $N=1$:
\begin{equation}
\label{eq:4.10}
 |x|^{\beta}H(x, t)\le C\int^{\infty}_{0}z^{-4}e^{-c_2z^{4/3}}\int^{Rz}_0\dfrac{\sigma^3}{\Big(\frac{R^3(Rz-\sigma)}{z}+(\sigma-1)^{4}\Big)^{\beta p/4}}\,d\sigma\,dz.
\end{equation}
We recall that $R=t^{1/4}/|x|$.
Then it follows from \cite[Lemmas~7.1, 7.2]{FGG} with $\beta=4/(p-1)$ that the right hand sides of \eqref{eq:4.9} and \eqref{eq:4.10} are bounded. 
Thus \eqref{eq:4.7} follows. 
Combining \eqref{eq:4.2} with \eqref{eq:4.7}, we obtain \eqref{eq:4.1}. 
This completes the proof. 
%}
\end{proof}

We now prove Theorems~\ref{Theorem:1.3} and \ref{Theorem:1.4}.

\begin{proof}[{\bf Proof of Theorem~\ref{Theorem:1.4}}]
 Let $\varepsilon>0$.
 We define a closed subset $(X, \| \, . \,\|)$ 
 of the corresponding  Banach space as follows:
 \begin{equation*}
  X:=\biggl\{v\in C(\R^N\times(0, \infty))\,\bigg|\,\| v\| \le 2\varepsilon K^*\biggl\},
 \end{equation*}
 \begin{equation*}
  \| v\| :=\sup_{(x, t)\in\R^N\times(0, \infty)}(|x|^\beta+t^{\beta/4})|v(x, t)|.
 \end{equation*}
 Here, $K^*$ is given in \eqref{eq:1.10}.
 Set
 \begin{equation*}
  \Phi[v](x, t):=\varepsilon[S(t)\varphi](x)+\int^t_0[S(t-s)F_p(v(\cdot, s))](x)\,ds\quad \text{for} \quad v\in X.
 \end{equation*}
 We find a fixed point of $\Phi$ on $X$ by the contraction mapping theorem.
 By \eqref{eq:1.10}, \eqref{eq:2.2} and Proposition~\ref{Proposition:4.1} we have
 \begin{equation*}
  \begin{split}
   (|x|^\beta+t^{\beta/4})|\Phi[v](x, t)|
   &\le K^*\varepsilon+(|x|^\beta+t^{\beta/4})\int^t_0|S(t-s)F_p(v(s))|\,ds\\
   &\le K^*\varepsilon+C\varepsilon^{p}(|x|^\beta+t^{\beta/4})H(x, t)\\
   &\le K^*\varepsilon(1+C\varepsilon^{p-1})
  \end{split}
 \end{equation*}
 for $v\in X$ and $(x, t)\in\R^N\times(0, \infty)$.
 Choosing $\varepsilon>0$ sufficiently small, we see that
 \begin{equation}
 \label{eq:4.11}
 \|\Phi[v]\| =
  \sup_{(x, t)\in \R^N\times(0, \infty)}(|x|^\beta+t^{\beta/4})|\Phi[v](x, t)|\le 2\varepsilon K^*
 \end{equation}
 for $v\in X$.
 By an argument similar  to that in \eqref{eq:4.11}, choosing $\varepsilon$ sufficiently small we have
 \begin{equation}
 \label{eq:4.12}
  \left\| \Phi[v]- \Phi[w]\right\|\le \dfrac{1}{2} \| v - w\|
 \end{equation}
 for $v$, $w\in X$.
 Thanks to \eqref{eq:4.11} and \eqref{eq:4.12} we obtain a unique fixed point $u\in X$ of $\Phi$ by the contraction mapping theorem.
Since $u\in X$, we obtain \eqref{eq:1.17}.
\end{proof}

\begin{proof}[{\bf Proof of Theorem~\ref{Theorem:1.3}}]
 Assume that $[S(t)\varphi](x)>0$ for $(x, t)\in\R^N\times(0, \infty)$.
 It follows from the same argument as in \eqref{eq:4.11} that $u$ satisfies
 \begin{equation*}
  \bigg|\int^t_0S(t-s)F_p(u(\cdot, s))](x)\,ds\bigg|\le\dfrac{C\varepsilon^{p}}{|x|^\beta+t^{\beta/4}}
 \end{equation*}
 for $(x, t)\in\R^N\times(0, \infty)$.
 This together with \eqref{eq:1.9} implies that
 \begin{equation*}
  u(x, t)\ge \dfrac{\varepsilon (K _*-C\varepsilon^{p-1})}{|x|^\beta+t^{\beta/4}}
 \end{equation*}
 for $(x, t)\in\R^N\times(0, \infty)$.
 Taking $\varepsilon>0$ small enough, we obtain \eqref{eq:1.15}.
 Therefore, the proof of Theorem~\ref{Theorem:1.3} is complete.
\end{proof}

\noindent
{\bf Acknowledgments.}
This work was initiated during the first author's visit at Tohoku University.
The first author is very grateful to second and third authors for their warm hospitality and the inspiring working atmosphere.
The second author was supported in part by the Grant-in-Aid for JSPS Fellows (No. JP19J10424) from Japan Society for the Promotion of Science.
The third author was supported in part by the Grant-in-Aid for Scientific Research(S) (No. JP19H05599) from Japan Society for the Promotion of Science.

%%%%%%%%%%%%%%%%%%%%%%%%%%%%%%%%%%%%%%%%%%%%%%%%%%%%%
%%%%%%%%%%%%%%%%%%%%%%%%%%%%%%%%%%%%%%%%%%%%%%%%%%%%%

\end{document}